\theoremstyle{plain}
\newtheorem{thm}{Theorem}[section]
\newtheorem{lem}[thm]{Lemma}
\newtheorem{prop}[thm]{Proposition}
\theoremstyle{definition}
\newtheorem*{definition*}{Definition}
\newtheorem{defin}[thm]{Definition}
\newtheorem*{notation*}{Notation}
\newcommand{\future}[1]{{}}
\title[The typical measure preserving transformation is not an IET]{The typical measure preserving transformation is not an interval exchange transformation}
\author{Jon Chaika and Diana Davis}
\begin{document}
\maketitle
\vspace{-0.25in}
\centerline{\today}

\section{Introduction and main results}

Two basic problems in mathematics are: when are two objects in a given class the same, and what are the properties of the typical object in a given class? 
A \emph{measure-preserving dynamical system} is a 4-tuple $(X,\mathcal{M},\mu,T)$ where $X$ is a set, $\mathcal{M}$ is a $\sigma$-algebra, $\mu$ is a measure on 
$(X,\mathcal{M})$ and $T:X \to X$ is an $\mathcal{M}$-measurable transformation satisfying $T_*\mu=\mu$. One notion of ``sameness'' in measure preserving systems is isomorphism: 
$(X,\mathcal{M},\mu,T)$ and $(Y,\mathcal{N},\nu,S)$ are \emph{isomorphic} if there exists \mbox{$\phi:X\to Y$}, defined $\mu$-almost everywhere, and $\phi^{-1}:Y \to X$, defined $\nu$-almost everywhere, so that $\phi_*\mu=\nu$ and $S= \phi \circ T \circ \phi^{-1}$.  A basic problem in ergodic theory is to find invariants that distinguish measure preserving systems. 

For the second question, since the 1940s, one way this has been interpreted is by considering the space of measure preserving transformations and calling a property \emph{typical} if it holds on a dense $G_\delta$, or \emph{residual}, set \cite{hal mix, Rokhlin no mix}.

Let $\lambda$ denote Lebesgue measure and let
$$\mathcal{X}=\{T:[0,1] \to [0,1]\ |\ T \text{ is bimeasurable and preserves }\lambda\}/\sim$$
where $\sim$ is the relation
$$(T\sim S \text{ if }Tx=Sx \,\text{ for } \lambda \text{ almost every } x).$$
Consider the topology generated  by 
$$N(T,A,\epsilon)=\{S\in \mathcal{X}: \lambda(SA \Delta TA)<\epsilon\}$$ 
where $A \subset [0,1)$ is measurable  and $\epsilon>0$. 
This called the \emph{weak} topology on $\mathcal{X}$, and turns $\mathcal{X}$ into a Polish space. This topology coincides with the $L^1(\lambda)$ topology restricted to $\mathcal{X}$ (which is a closed subset of $L^1(\lambda)$).

An \emph{interval exchange transformation} (IET) is an invertible  piecewise orientation preserving isometry of the interval, such that the interval is divided into $d$ left closed, right open subintervals that are permuted (see Figure \ref{fig:iet_graph}).  Note that if $T$ is an IET that is continuous on an interval $J$, it is automatically an (orientation preserving) isometry on $J$. 
Our main result is that the typical measure preserving transformation is not isomorphic to any IET:
\begin{thm}\label{thm:main} $\{S\in \mathcal{X}: \text{ there does not exist }T \text{ an IET with }S \text{ isomorphic to }T\}$  contains a dense $G_\delta$ set. 
\end{thm}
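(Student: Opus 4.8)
The plan is to realize the target set as a countable intersection of dense $G_\delta$ sets, one for each possible number of exchanged intervals, and to extract each of these from the combinatorial rigidity of IETs. Suppose $S$ is isomorphic, via $\phi$, to an IET $T$ on $m$ subintervals, and let $\mathcal{Q}_0$ be the partition of $[0,1)$ into the $m$ continuity intervals of $T$. Since $S$ is ergodic (ergodicity is generic, so we may assume it), $T$ is ergodic, hence minimal on a set of full measure, so $\mathcal{Q}_0$ — or $\mathcal{Q}_0 \vee T\mathcal{Q}_0$ if one wants a two-sided generator — is a generating partition. A direct induction, using that $T^i$ has at most $i(m-1)$ discontinuities, shows that $\bigvee_{i=0}^{n-1} T^{-i}\mathcal{Q}_0$ consists of at most $1+n(m-1)$ subintervals for every $n$. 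Pulling back by $\phi$: \emph{if $S$ is isomorphic to an $m$-IET, then $S$ admits a finite generating partition $\mathcal{Q}$, with a number of atoms bounded in terms of $m$, whose cylinder complexity satisfies $p_{\mathcal{Q}}(n)\le 1+n(m-1)$ for all $n$}. This isomorphism-invariant necessary condition is the only property of IETs the argument will use; in particular it captures IETs uniformly, including rotations and the weakly mixing IETs.

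Fix $m$ and let $B_m\subseteq\mathcal{X}$ be the set of $S$ that do admit such a finite generating partition with $p_{\mathcal{Q}}(n)\le 1+n(m-1)$ for all $n$. By the previous paragraph, $\{S:S\text{ isomorphic to an }m\text{-IET}\}\subseteq B_m$, and since every IET exchanges finitely many intervals, $\{S:S\text{ isomorphic to some IET}\}\subseteq\bigcup_m B_m$. It therefore suffices to show that each $B_m$ is meager, or equivalently that $\mathcal{X}\setminus B_m$ contains a dense $G_\delta$; then $\bigcap_m(\mathcal{X}\setminus B_m)$ is the required dense $G_\delta$. The heart of the matter is thus purely about the generic transformation: it is too complicated, at every scale, to carry any finite generating partition of at most linear cylinder complexity.

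For genericity of $\mathcal{X}\setminus B_m$ I would argue that near any $S_0\in\mathcal{X}$, and for any prescribed large $N$, there is an open set of $S$ with the feature that \emph{every} partition $\mathcal{Q}$ into at most a fixed multiple of $m$ sets already has $\bigvee_{i=0}^{N-1}S^{-i}\mathcal{Q}$ containing far more than $1+N(m-1)$ atoms of non-negligible measure. One builds such $S$ by Rokhlin's lemma together with cutting-and-stacking-type perturbations: approximate $S_0$ in the weak topology by an $S$ for which a very tall tower is available on whose levels $S$ acts by an explicit, highly "scrambling" permutation, engineered so that the $S$-names of length $N$ of points in the tower realize a super-linear number of words regardless of how the space is cut into few pieces — intuitively forcing a small set's trajectory to visit many independent regions by time $N$. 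Diagonalizing over a countable basis of neighborhoods and over a countable cofinal set of scales $N$ then produces the dense $G_\delta$.

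The main obstacle is precisely this last step: converting "the generic transformation has no finite generating partition of linear complexity" into an honest $G_\delta$ condition and verifying density. The trouble is the universal quantifier over partitions, since complexity is not continuous in $\mathcal{Q}$ and "generating" is itself only a $G_\delta$ condition on $\mathcal{Q}$; one cannot naively intersect over all $\mathcal{Q}$. The resolution should be to never quantify over all $\mathcal{Q}$ at once: show that the generic $S$ has, for each $m$, a time $N=N(S,m)$ by which every sufficiently fine partition into at most a constant$\cdot m$ sets overshoots $1+N(m-1)$ — a statement robust under small $L^1$ perturbations of $\mathcal{Q}$, hence reducible to a countable dense family — while the generating clause is absorbed because a non-generating partition only makes the structure of $S$ simpler and can be handled by induction on $m$ (or by noting $S$ would then factor onto a system of strictly lower complexity). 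A secondary, more routine point that must be made airtight is the IET side: that an \emph{ergodic} IET genuinely supplies a generating partition obeying the linear bound, with non-minimal or reducible IETs dealt with by descending to sub-IETs, so that no IET slips out of $\bigcup_m B_m$.
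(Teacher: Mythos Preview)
Your route via symbolic complexity is genuinely different from the paper's, and the IET half is fine: a minimal $m$-IET does carry a finite generating partition with $p_{\mathcal{Q}}(n)\le 1+n(m-1)$. The paper avoids partitions entirely. It isolates an isomorphism invariant that is phrased directly in terms of correlations $\lambda(T^{-n}I\cap J)$ for a fixed countable family of test sets (dyadic intervals): having an $f$-thick mixing sequence with $f(j)=j^j$. Because only countably many fixed sets are involved, the $G_\delta$ structure is automatic; density comes for free from the existence of a single mixing transformation and Halmos' conjugacy lemma. IETs are then ruled out by exhibiting, for every minimal IET, a $c$-partial rigidity sequence $n_1<n_2<\cdots$ with $n_{i+1}<r\,n_i$, which is incompatible with any $j^j$-thick mixing sequence. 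Non-minimal IETs have invariant components and so have no mixing sequence at all.

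Your proposal has a real gap precisely where you flag it, and the workaround you sketch does not close it. The set $B_m$ is defined by an existential over generating partitions, so $\mathcal{X}\setminus B_m$ has no a priori $G_\delta$ structure. Your proposed substitute --- an $N$ for which \emph{every} partition into at most $C m$ sets already has more than $1+N(m-1)$ cylinders of non-negligible measure --- is simply false for every aperiodic $S$: take a Rokhlin tower of height $H\gg N$ and let $\mathcal{Q}$ be a partition into unions of levels (or just tower versus error set). On all but $O(N/H)+\epsilon$ of the space the $N$-name is constant, so the number of $N$-cylinders of measure $>\delta$ is bounded independently of $N$. The only thing that eliminates such $\mathcal{Q}$ is that they are not generating, but ``generating'' cannot be certified at any finite horizon, so it cannot be folded into an open condition at scale $N$; for the same reason, passing to a countable $L^1$-dense family of partitions does not help, since the tower-aligned $\mathcal{Q}$ (which depends on $S$) is $L^1$-close to some member of the family, forcing that member to have low complexity at scale $N$ as well. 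Your induction-on-factors idea does not restore the $G_\delta$ structure either: it addresses what happens when a low-complexity $\mathcal{Q}$ fails to generate, but does not convert the existential over generating $\mathcal{Q}$ into a countable intersection of open conditions on $S$. The paper's invariant was engineered exactly to sidestep this quantifier.
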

We remark that the typical measure preserving system shares many properties with some (often most or all) IETs. The typical measure preserving system is weakly mixing but not strongly mixing, rigid, and not simple. There exist many IETs sharing these properties, so we need different invariants to distinguish the typical measure preserving transformation from all IETs.

\begin{figure}[!h]
\begin{center}
\includegraphics[width=0.5\textwidth]{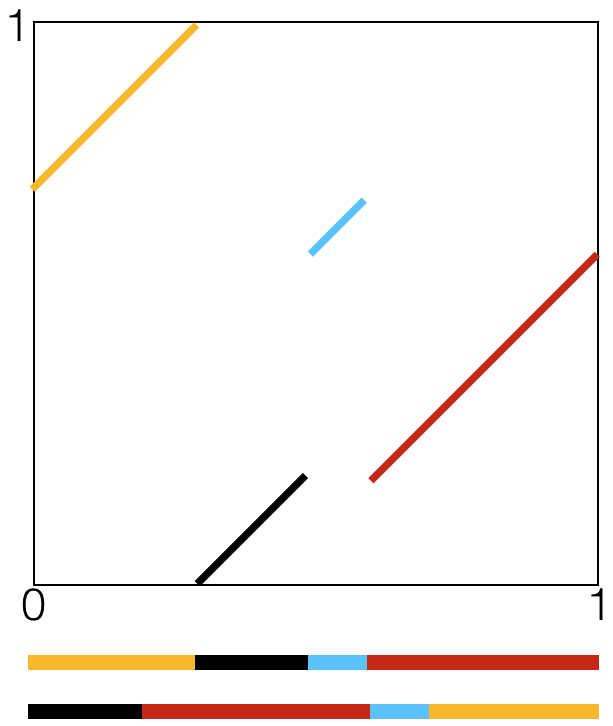}
\caption{An interval exchange transformation, and its graph}
\label{fig:iet_graph}
\end{center}
\end{figure}

\begin{defin}We say $a_1< a_2 < \ldots$ is a \emph{mixing sequence} for $T$ if for any measurable sets $A,B$  we have that 
$$\underset{i \to \infty}{\lim}\, \lambda(T^{-a_i}A\cap B)=\lambda(A)\lambda(B).$$ 
If $T$ has a mixing sequence, it is called \emph{weakly mixing}, and if $\mathbb{N}$ is a mixing sequence for $T$, then $T$ is called \emph{mixing}.
\end{defin}
Halmos proved that a residual subset of $\mathcal{X}$ is weakly mixing \cite{hal mix}, and Rokhlin proved that  a residual set of $\mathcal{X}$ is not mixing \cite{Rokhlin no mix}. 
 It is our understanding that these two results are historically how people learned that there are weakly mixing transformations that are not mixing. (For an explicit example, see \cite{chacon}.)

\begin{defin} Given an increasing function $f:\mathbb{N} \to \mathbb{N}$, we say that an increasing sequence of natural numbers $a_1<a_2< \ldots$ is \emph{$f$-thick} if 
$[j,f(j)]\subset\{a_i\}_{i=1}^\infty$ for infinitely many $j$.
\end{defin}

\begin{prop}\label{prop:thick residual} Let $f$ be any increasing function. Then there exists $G$, a residual subset of $\mathcal{X}$ (with the weak topology), so that every element of $G$ has an $f$-thick mixing sequence. 
\end{prop}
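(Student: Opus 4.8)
The plan is to run a Baire-category argument. We may assume $f(j)\ge j$ for all $j$, since replacing $f$ by a larger function only strengthens the conclusion. I would first fix a countable family $\mathcal{A}=\{A_1,A_2,\dots\}$ that is dense in the measure algebra of $([0,1],\lambda)$ — say all finite unions of dyadic subintervals — and record, using $T_*\lambda=\lambda$ and a standard $3\epsilon$-estimate, that an increasing sequence $(a_i)$ is a mixing sequence for $T$ as soon as $\lambda(T^{-a_i}A_k\cap A_l)\to\lambda(A_k)\lambda(A_l)$ for all $k,l$. Then, for $n,N,m\in\mathbb{N}$, I would set
$$U_{n,N,m}=\Big\{T\in\mathcal{X}\ :\ \exists\, j\ge N\ \text{with}\ \big|\lambda(T^{-a}A_k\cap A_l)-\lambda(A_k)\lambda(A_l)\big|<\tfrac1n\ \text{for all}\ a\in[j,f(j)],\ k,l\le m\Big\},$$
put $G=\bigcap_{n,N,m}U_{n,N,m}$, and prove the proposition by showing each $U_{n,N,m}$ is open and dense and that every $T\in G$ has an $f$-thick mixing sequence.

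Openness is routine: for a fixed $j$ the bracketed condition is a finite intersection — over the finite sets $[j,f(j)]$ and $\{1,\dots,m\}^2$ — of conditions $|\lambda(A_k\cap T^aA_l)-c_{k,l}|<\tfrac1n$, using the identity $\lambda(T^{-a}A_k\cap A_l)=\lambda(A_k\cap T^aA_l)$; each such set is open since $T\mapsto T^a$ is continuous in the weak topology (iterated composition is continuous) and $S\mapsto\lambda(A_k\cap SA_l)$ is continuous, so $U_{n,N,m}$ is a union of open sets over $j\ge N$. Density is where I would invoke an external fact: fix once and for all a mixing $M\in\mathcal{X}$ (e.g.\ a Bernoulli shift realized on $[0,1]$). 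Being mixing, $M$ is aperiodic, so its conjugacy class $\{\varphi M\varphi^{-1}:\varphi\in\mathcal{X}\}$ is weakly dense in $\mathcal{X}$ — the standard consequence of Rokhlin's lemma obtained by building a Rokhlin tower for $M$ and copying it onto one for the target transformation while aligning finitely many dyadic sets. Every conjugate of $M$ is mixing, hence $\mathbb{N}$ is a mixing sequence for it, so for each $n,m$ there is $j_0$ with $|\lambda((\varphi M\varphi^{-1})^{-a}A_k\cap A_l)-\lambda(A_k)\lambda(A_l)|<\tfrac1n$ for all $a\ge j_0$ and $k,l\le m$; taking $j=\max(j_0,N)$ shows $\varphi M\varphi^{-1}\in U_{n,N,m}$, so $U_{n,N,m}$ contains a dense set.

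Since $G$ is then a dense $G_\delta$, it remains to extract an $f$-thick mixing sequence for a given $T\in G$. I would set $j_0=0$ and, recursively for $t\ge1$, use $T\in U_{t,\,f(j_{t-1})+1,\,t}$ to choose $j_t>f(j_{t-1})$ with $|\lambda(T^{-a}A_k\cap A_l)-\lambda(A_k)\lambda(A_l)|<\tfrac1t$ for all $a\in[j_t,f(j_t)]$ and $k,l\le t$. The intervals $[j_t,f(j_t)]$ are nonempty and pairwise disjoint; let $(a_i)_{i\ge1}$ enumerate their union in increasing order. This sequence contains $[j_t,f(j_t)]$ for every $t$, so it is $f$-thick, and for fixed $k,l$ and $\epsilon>0$ every sufficiently large $a_i$ lies in a block $[j_t,f(j_t)]$ with $t\ge\max(k,l,\lceil1/\epsilon\rceil)$, whence $|\lambda(T^{-a_i}A_k\cap A_l)-\lambda(A_k)\lambda(A_l)|<\tfrac1t\le\epsilon$; so $(a_i)$ is a mixing sequence for every $A_k$, hence for all measurable sets. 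I expect the main obstacle to be the density step, namely the (classical) weak density of the conjugacy class of an aperiodic transformation; everything else is bookkeeping, the one genuine subtlety being to set up $U_{n,N,m}$ and the recursion so that a single sequence serves all test sets simultaneously.
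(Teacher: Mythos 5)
Your proposal is correct and follows essentially the same route as the paper: the set $U_{n,N,m}$ you build is exactly the paper's $\bigcup_{j\ge b}\bigcap_{k,\ell<m}U_{D_k,D_\ell;j,f(j)}(1/n)$, and the density step (a mixing transformation is aperiodic, so its conjugacy class is weakly dense, and $\mathbb{N}$ is $f$-thick for every $f$) is the paper's argument verbatim. The only difference is that you spell out the extraction of the $f$-thick mixing sequence from membership in $G$, which the paper leaves implicit.
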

 The next proposition requires a definition: A transformation is \emph{minimal} if the orbit of every point is dense. 
\begin{prop}\label{prop:no-jj-thick}
No minimal IET has a $j \to j^j$-thick mixing sequence. 
\end{prop}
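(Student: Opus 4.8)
The plan is to reduce the statement to a quantitative, Katok‑type non‑mixing estimate extracted from Rauzy–Veech induction, and then to play that estimate against the extreme growth of $f(j)=j^j$. First a harmless reduction: if $T$ is not weakly mixing then it has no mixing sequence at all, so the statement is vacuous. Indeed, if $T$ is not $\lambda$‑ergodic one uses an invariant set; and if $U_Tf=\zeta f$ with $|\zeta|=1$, $\zeta\ne1$, $f$ non‑constant, then along any mixing sequence $\{a_i\}$ we have $\langle U_T^{a_i}f,f\rangle=\zeta^{a_i}\|f\|^2\to|\langle f,1\rangle|^2\in\mathbb R$, forcing $\zeta^{a_i}\to\pm1$ and hence $U_T^{a_i}f\to\pm f$, while a mixing sequence forces $U_T^{a_i}g\to\langle g,1\rangle\mathbf 1$ weakly for every $g\in L^2$; so $f$ is constant, a contradiction. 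Hence assume $T$ minimal and weakly mixing, in particular $\lambda$‑ergodic. Also record the basic consequence of thickness: if $\{a_i\}$ is a $j\mapsto j^j$‑thick mixing sequence and $j_1<j_2<\cdots\to\infty$ enumerates those $j$ with $[j,j^j]\subseteq\{a_i\}$, then for every fixed measurable $B$ and every $\epsilon>0$ there is $t_0(B,\epsilon)$ such that $\lambda(T^{-m}B\cap B)<\lambda(B)^2+\epsilon$ for all $m\in[j_t,j_t^{j_t}]$ and all $t\ge t_0$, because $\lambda(T^{-a_i}B\cap B)\to\lambda(B)^2$ and, once $t$ is large, every element of the block has large index.

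Next I would build a \emph{fixed} non‑mixing witness at each large renormalization stage. Running Rauzy–Veech induction on $T$ gives nested intervals $J_M$ with $|J_M|\to0$, a first‑return decomposition $J_M=\bigsqcup_{j=1}^{d}P_M^{(j)}$ with return times $r_M^{(j)}$, and the Rokhlin tower $[0,1)=\bigsqcup_{j}\bigsqcup_{i=0}^{r_M^{(j)}-1}T^iP_M^{(j)}$ over $J_M$; a single step changes exactly one return time, replacing it by a sum of two, so $R_M:=\max_j r_M^{(j)}$ satisfies $R_M\le 2R_{M-1}$, while $R_M\ge 1/|J_M|\to\infty$. Call column $j$ tall if $r_M^{(j)}\ge R_M/2$; the widest column is tall and has $\lambda$‑measure $\ge 1/d$. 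Put
\[
A_M:=\bigsqcup_{j\ \text{tall}}\ \bigsqcup_{0\le i<\lfloor r_M^{(j)}/3\rfloor}T^iP_M^{(j)},
\]
a fixed set with $\lambda(A_M)\in[\tfrac1{4d},\tfrac13]$ for $M$ large. For $1\le q\le R_M/12$ and a tall column $j$, every floor $T^iP_M^{(j)}$ with $0\le i<\lfloor r_M^{(j)}/3\rfloor-q$ lies in $A_M$ and is carried by $T^q$ to $T^{i+q}P_M^{(j)}\subseteq A_M$ (the iterate stays inside column $j$ since $i+q<r_M^{(j)}$), and there are at least $\tfrac16 r_M^{(j)}-1$ such floors because $r_M^{(j)}\ge 6q$. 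Summing over tall columns,
\[
\lambda(T^{-q}A_M\cap A_M)\ \ge\ \tfrac12\lambda(A_M)-O\big(d\,|J_M|\big)\ \ge\ \lambda(A_M)^2+\delta\qquad\text{for all }1\le q\le R_M/12,
\]
with $\delta=\delta(d)>0$ fixed (using $\lambda(A_M)\le\tfrac13$, so $\tfrac12\lambda(A_M)-\lambda(A_M)^2\ge\tfrac16\lambda(A_M)\ge\tfrac1{24d}$, and $M$ large enough to absorb the error).

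Finally I would combine the two ingredients. Applying the thickness consequence with $B=A_M$ and $\epsilon=\delta$ and comparing with the displayed estimate forces $[j_t,j_t^{j_t}]\cap[1,R_M/12]=\emptyset$ for all $t\ge t_0(A_M,\delta)$. Given a block index $t$, choose the least $M=M(t)$ with $R_M\ge 12\,j_t$; then $R_M\le 2R_{M-1}$ gives $R_{M(t)}/12\in[j_t,2j_t)\subseteq[j_t,j_t^{j_t}]$, so the window $[1,R_{M(t)}/12]$ genuinely meets the block $[j_t,j_t^{j_t}]$, and as soon as $t\ge t_0(A_{M(t)},\delta)$ this contradicts the emptiness just derived. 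Since $M(t)\to\infty$, arbitrarily large $M$ arise this way; it remains only to arrange $t\ge t_0(A_{M(t)},\delta)$, i.e.\ to prevent the mixing rate of the witnesses $A_M$ from deteriorating faster than the blocks lengthen — and this is exactly where $f(j)=j^j$ (as opposed to a slowly growing $f$) is used, since the windows grow only geometrically in $M$ while the blocks are doubly‑exponentially long.

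The main obstacle is precisely this last matching step: a priori the convergence $\lambda(T^{-a_i}A_M\cap A_M)\to\lambda(A_M)^2$ could be much slower for larger $M$, and one must rule out that every block $[j_t,j_t^{j_t}]$ slips into a gap of $\{a_i\}$ lying above the settling point of the corresponding $A_M$. I expect this to be handled by a compactness argument — passing to a subsequence of stages along which the Rauzy–Veech combinatorics repeat, so that the $A_M$ stabilize in measure to a single limiting witness with one settling time — or by directly constructing a single fixed set that is a non‑mixing witness at infinitely many scales simultaneously.
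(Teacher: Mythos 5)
Your reduction to a quantitative non-mixing estimate over tower scales is in the right spirit, but two steps fail as written. First, the claim that ``the widest column is tall'' is false in general: Rauzy--Veech towers can be arbitrarily unbalanced, so the tallest column (height comparable to $R_M$) may be very thin and carry almost no measure, while the columns carrying measure $\ge 1/d$ may have height far below $R_M/2$. (Already for rotations at intermediate induction steps inside a block with a huge partial quotient, the unique column of height $\ge R_M/2$ can have measure $o(1)$.) So the lower bound $\lambda(A_M)\ge \tfrac1{4d}$, and with it the uniform $\delta(d)$ in your displayed estimate, is unjustified; if you instead define ``tall'' relative to the massive columns, you lose the geometric control $R_M\le 2R_{M-1}$ that makes the windows $[1,R_M/12]$ sweep out all large integers. (A smaller technical point: Rauzy--Veech induction runs forever only under Keane's i.d.o.c.\ condition, which a minimal IET need not satisfy, so your scheme also needs a preliminary reduction that the paper's argument, built only on first-return maps to intervals, avoids.)

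Second, and more fundamentally, the step you yourself flag as ``the main obstacle'' is exactly the heart of the matter and is not resolved: you need the block $[j_t,j_t^{j_t}]$ to meet the window $[1,R_{M(t)}/12]$ \emph{after} the settling time $t_0(A_{M(t)},\delta)$, but $t_0$ depends on $A_{M(t)}$ while $M(t)$ depends on $t$, and nothing prevents the mixing rate for the witnesses $A_M$ from degrading faster than the blocks lengthen; the suggested compactness/stabilization of the $A_M$ is not carried out and is not obviously available (the sets need not converge in measure even along a subsequence with repeating combinatorics). The paper circumvents this circularity by never letting the witness depend on the scale: it proves a partial-rigidity statement (Proposition \ref{prop:keane-rigidity}) giving, for every large $n$, a time $k\in[\frac n{20d},20nd]$ and a set of measure $\ge 10^{-5}d^{-5}$ moved by $T^k$ by less than $\epsilon$; then, in the lemma reducing Proposition \ref{prop:no-jj-thick} to this, it intersects the rigid sets with dyadic intervals at one \emph{fixed} scale $2^{-k}$ determined only by the rigidity constant $c$, so the non-mixing witnesses range over a finite family and the pigeonhole principle extracts a single fixed interval that violates mixing along infinitely many of these times --- no uniformity of convergence rates is needed, and $f(j)=j^j$ enters only through the elementary fact that a sequence with bounded ratios $n_{i+1}<rn_i$ must hit every block $[j,j^j]$ with $j$ large. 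To rescue your route you would need either such a fixed-witness reduction or a genuinely uniform (in $M$) non-mixing estimate, and producing positive-measure partially rigid sets with return times in prescribed geometric windows is precisely what the paper's case analysis (Lemmas \ref{lem:easy rig}, \ref{lem:sizable}, \ref{lem:possibility_2}) is for.
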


In the language of the introductory paragraphs, certain types of mixing sequences are invariants  that distinguish the typical measure preserving transformation from all interval exchange transformations.

\begin{proof}[Proof of Theorem \ref{thm:main} assuming Propositions \ref{prop:thick residual} and \ref{prop:no-jj-thick}]
Consider $f(j)=j^j$. By Proposition \ref{prop:thick residual}, $ G $,  a residual set of measure preserving transformations has an $f$-thick mixing sequence. Every non-minimal IET has at least two invariant components of positive measure \cite[Theorem 2.16]{bosh}. Letting $A$ and $B$ be two such components, the indicator functions of $A$ and $B$ show that $T$ can not have \emph{any} mixing sequence. 
By Proposition \ref{prop:no-jj-thick}, no minimal IET has an $f$-thick mixing sequence. Clearly isomorphic transformations have the same mixing sequences, and so no measure preserving transformation in $G$  is isomorphic to any IET. 
\end{proof}

\section{Proof of Proposition \ref{prop:thick residual}}
Proving that a property is typical in the space of measure preserving transformations has a standard strategy: First, one shows that it is a $G_\delta$ property. This often comes from studying approximating conditions which are open. Second, one shows that there exists an aperiodic transformation satisfying the property, and quotes the following result of Halmos:
\begin{lem} \cite[Theorem 1]{hal mix} The conjugacy class of any aperiodic transformation in $\mathcal{X}$ is dense in $\mathcal{X}$. 
\end{lem}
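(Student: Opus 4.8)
This statement is classical, due to Halmos, and the route I would take proceeds through Rokhlin's lemma and a periodic approximation. Recall that the weak topology is metrizable, with a basis of neighborhoods of $S$ given by sets of the form $\{R\in\mathcal{X}:\lambda(RA_i\,\Delta\,SA_i)<\epsilon,\ 1\le i\le k\}$ for finite families $A_1,\dots,A_k$ and $\epsilon>0$. To prove density of the conjugacy class it therefore suffices to show: for every $S\in\mathcal{X}$, every finite family $A_1,\dots,A_k$, and every $\epsilon>0$, there is an invertible $\phi\in\mathcal{X}$ with $\lambda(\phi T\phi^{-1}A_i\,\Delta\,SA_i)<\epsilon$ for all $i$. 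My first step would be to replace the target $S$ by a cyclic permutation: for $n$ large, let $R_0,\dots,R_{n-1}$ be the partition of $[0,1)$ into intervals of length $1/n$ and let $\sigma$ cyclically translate them, $\sigma(R_j)=R_{j+1\bmod n}$. Approximating each $A_i$ by unions of atoms, using that the doubly stochastic matrix $[\,n\,\lambda(SR_i\cap R_j)\,]_{i,j}$ lies in the convex hull of permutation matrices (Birkhoff--von Neumann) to extract an approximating permutation, and then subdividing atoms and linking the resulting cycles into a single cycle at cost $O(1/n)$, I obtain such a $\sigma$ with $\lambda(\sigma A_i\,\Delta\,SA_i)<\epsilon/2$ for all $i$. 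It then remains to approximate the single $n$-cycle $\sigma$ by a conjugate of $T$.

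The second step is where aperiodicity enters, through Rokhlin's lemma: since $T$ is aperiodic, for every $\delta>0$ there is a base $B$ with $B,TB,\dots,T^{n-1}B$ pairwise disjoint and $\lambda\bigl(\bigcup_{j=0}^{n-1}T^jB\bigr)>1-\delta$. Writing $B_j=T^jB$, each level has measure $\beta\in((1-\delta)/n,\,1/n]$, and $T$ acts on this tower exactly as the shift $B_j\mapsto B_{j+1}$ for $j<n-1$.

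The third step conjugates the tower onto $\sigma$. I would define $\phi\in\mathcal{X}$ by mapping each level $B_j$ measure preservingly onto a subset of $R_j$ of measure $\beta$, and mapping the remaining part of the domain (the tower complement, of measure $<\delta$) measure preservingly onto the remaining part of the range (the uncovered subsets of the $R_j$, of equal total measure $<\delta$). Then for $x\in\phi(B_j)$ with $j<n-1$ one has $\phi T\phi^{-1}x\in R_{j+1}$, so $\phi T\phi^{-1}$ agrees with $\sigma$ except on the image $\phi(B_{n-1})$ of the top level (measure $\le 1/n$), where $T$ leaves the tower, together with the two leftover sets (measure $<2\delta$). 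Hence $\phi T\phi^{-1}$ and $\sigma$ differ only on a set of measure at most $1/n+2\delta$, giving $\lambda(\phi T\phi^{-1}A_i\,\Delta\,\sigma A_i)\le 2(1/n+2\delta)$ for each $i$. Choosing $n$ large and $\delta$ small and combining with the first step by the triangle inequality completes the proof.

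The main obstacle, and the deepest input, is Rokhlin's lemma of the second step: it is precisely the aperiodicity hypothesis that licenses a tall tower exhausting all but an arbitrarily small part of the space, and without it (say for a rotation of finite order) the conjugacy class is not dense. The remaining difficulties are bookkeeping: verifying that $\phi$ really is an invertible element of $\mathcal{X}$ (which forces the two leftover sets to have equal measure, as they do since $\phi$ preserves $\lambda$ globally), and the combinatorial cycle-linking in the first step that upgrades an approximating permutation to a single $n$-cycle at negligible cost.
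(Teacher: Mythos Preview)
The paper does not give its own proof of this lemma; it simply quotes Halmos's result \cite[Theorem 1]{hal mix} and uses it as a black box. So there is nothing in the paper to compare your argument against. Your outline is essentially the classical Halmos proof via Rokhlin towers and periodic approximation, and the overall architecture is correct.

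One expositional point is worth tightening. In the third step you define $\phi$ by ``mapping each level $B_j$ measure preservingly onto a subset of $R_j$'' and then assert that $\phi T\phi^{-1}$ \emph{agrees with} $\sigma$ off a small set. But from $\phi(B_j)\subset R_j$ alone you only get $\phi T\phi^{-1}x\in R_{j+1}$ when $x\in\phi(B_j)$, not $\phi T\phi^{-1}x=\sigma x$. To get pointwise agreement you must build $\phi$ coherently: fix any measure-preserving bijection $\phi_0:B_0\to R_0'\subset R_0$ and then set $\phi|_{B_j}=\sigma^{j}\circ\phi_0\circ T^{-j}$, so that $\phi\circ T=\sigma\circ\phi$ on $B_0\cup\cdots\cup B_{n-2}$ by construction. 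With that choice your bound $\lambda(\phi T\phi^{-1}A_i\,\Delta\,\sigma A_i)\le 2(1/n+2\delta)$ follows exactly as you wrote. (Alternatively, since after Step~1 the sets $A_i$ may be taken to be unions of the $R_j$, the weaker containment $\phi T\phi^{-1}(\phi(B_j))=\phi(B_{j+1})\subset R_{j+1}$ already suffices; but then the phrase ``agrees with $\sigma$'' should be replaced by this set-level statement.) Either fix is routine; the substance of your argument is sound.
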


We now follow the strategy outlined above.  

\begin{lem}\label{lem:gdelta} For any increasing function $f$, the subset of $\mathcal{X}$ that has an $f$-thick mixing sequence is a $G_\delta$ set.
\end{lem}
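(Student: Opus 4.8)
The plan is to write the set of transformations with an $f$-thick mixing sequence as a countable intersection of open sets, by unwinding the quantifiers in the definitions. Fix a countable dense family $\{A_k\}_{k\in\N}$ of measurable subsets of $[0,1]$ in the weak (i.e. $L^1$, via indicator functions) topology; because approximating $\lambda(T^{-n}A\cap B)$ by $\lambda(T^{-n}A_k\cap B_\ell)$ is uniform in $T$, it suffices to test the mixing condition on pairs from this family. The key reformulation is: $T$ has an $f$-thick mixing sequence if and only if there is a single increasing sequence $a_1<a_2<\cdots$ that is $f$-thick and along which $\lambda(T^{-a_i}A_k\cap A_\ell)\to\lambda(A_k)\lambda(A_\ell)$ for all $k,\ell$. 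I would first record this equivalence as a short sublemma.

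The main step is to encode ``there exists an $f$-thick sequence that works'' without an existential quantifier over uncountably many sequences. The device is to use blocks: being $f$-thick means containing $[j,f(j)]$ for infinitely many $j$, so I would define, for $m,N\in\N$,
\[
U_{m,N}=\Bigl\{T\in\mathcal X:\ \exists\, j\geq N \text{ with } \bigl|\lambda(T^{-n}A_k\cap A_\ell)-\lambda(A_k)\lambda(A_\ell)\bigr|<\tfrac1m \text{ for all } j\leq n\leq f(j) \text{ and all } k,\ell\leq m\Bigr\}.
\]
For fixed $j$ the inner condition is a finite conjunction of strict inequalities in quantities $T\mapsto\lambda(T^{-n}A_k\cap A_\ell)$, each of which is continuous on $\mathcal X$ in the weak topology (indeed $\|{\bf 1}_{T^{-n}A}-{\bf 1}_{S^{-n}A}\|_{L^1}$ is small when $S$ is weakly close to $T$, since composition is continuous), so each defines an open set; the existential over $j\geq N$ is a countable union, hence $U_{m,N}$ is open. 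Then $\bigcap_{m}\bigcap_{N}U_{m,N}$ is a $G_\delta$ set, and I claim it equals the set in question: if $T$ lies in every $U_{m,N}$ then for each $m$ one can pick $j_m\to\infty$ (using the $N$ parameter to force growth) with the approximation valid on $[j_m,f(j_m)]$ to tolerance $1/m$ for the first $m$ sets, and splicing these blocks together (discarding overlaps and filling gaps arbitrarily to keep the sequence increasing) produces an $f$-thick sequence along which mixing holds; conversely an $f$-thick mixing sequence immediately witnesses membership in each $U_{m,N}$.

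The main obstacle is the splicing/diagonalization argument showing that membership in all the $U_{m,N}$ actually yields one honest $f$-thick mixing sequence: one must check that the blocks $[j_m,f(j_m)]$ can be chosen disjoint and increasing, that the union of these blocks with arbitrary connecting integers is still $f$-thick (it is, since it literally contains each $[j_m,f(j_m)]$), and that the resulting full sequence gives convergence for \emph{every} pair $A_k,A_\ell$ and hence, by density, for every pair of measurable sets. A small amount of care is also needed to confirm that restricting to the countable dense family $\{A_k\}$ loses nothing, i.e. that a sequence along which $\lambda(T^{-a_i}A_k\cap A_\ell)\to\lambda(A_k)\lambda(A_\ell)$ for all $k,\ell$ is automatically a mixing sequence; this follows from a standard $3\epsilon$ estimate using $|\lambda(T^{-a}A\cap B)-\lambda(T^{-a}A'\cap B')|\le \|{\bf 1}_A-{\bf 1}_{A'}\|_{L^1}+\|{\bf 1}_B-{\bf 1}_{B'}\|_{L^1}$ together with the same bound for the product $\lambda(A)\lambda(B)$.
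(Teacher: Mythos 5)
Your proof is essentially the paper's: the paper writes the set as $\bigcap_{m}\bigcap_{n}\bigcap_{b}\bigcup_{j\geq b}\bigcap_{k,\ell<m}U_{D_k,D_\ell;j,f(j)}(1/n)$ with $\{D_k\}$ the dyadic intervals, which is exactly your $\bigcap_m\bigcap_N U_{m,N}$ with the tolerance index and the family-size index merged into one, and the same supporting facts (continuity of $T\mapsto\lambda(T^{-n}A\cap B)$, reduction to a countable dense family) invoked. One correction to your splicing step: you should \emph{not} ``fill gaps arbitrarily'' between the blocks $[j_m,f(j_m)]$, since mixing could fail at the inserted integers; the union of the blocks, listed in increasing order, is already an increasing $f$-thick sequence (an increasing sequence of naturals need not consist of consecutive integers), and along it convergence holds by exactly the argument you give.
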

The proof uses some straightforward facts that we leave as an exercise:
\begin{enumerate} 
\item For $A,B$ measurable and $n \in \mathbb{Z}$, the function defined by $\Phi_{A,B;n}:\mathcal{X} \to \mathbb{C}$ by $\Phi_{A,B;n}(T)=\lambda(T^{-n}A \cap B)$ is continuous. 
\item For any $j,k \in \mathbb{N}$ and $\epsilon>0$, the set 
$$U_{A,B;j,k}(\epsilon):=\{T \in \mathcal{X}:|\lambda(T^{-n}A \cap B)-\lambda(A)\lambda(B)|<\epsilon\, \text{ for all }j\leq n\leq k\}$$ is open. 
\item $\lambda(T^{-n_i}A\cap B)-\lambda(A)\lambda(B) \to 0$ for all measurable sets $A,B$ iff $\lambda(T^{-n_i}I\cap J)-\lambda(I)\lambda(J)$ for all dyadic intervals $I,J$. 
\end{enumerate}
\begin{proof}[Proof of Lemma \ref{lem:gdelta}]
Let $D_1,...$ be an enumeration of the (countable) set of dyadic intervals. The subset of elements of $\mathcal{X}$ that have $f$-thick  mixing sequences is
$$ \bigcap_{m=1}^\infty \, \bigcap_{n=1}^\infty \,  \bigcap_{b=1}^{\infty}  \, \bigcup^{\infty}_{j=b}  \bigcap_{k,\ell<m}U_{D_k,D_\ell; j,f(j)}\left(\frac 1 n\right).$$
This is by construction a countable intersection of open sets and therefore is a $G_\delta$ set. 
\end{proof}

\begin{proof}[Proof of Proposition \ref{prop:thick residual}] By the previous two lemmas it suffices to show that there exists an aperiodic measure preserving transformation 
with an $f$-thick mixing sequence. As there are mixing transformations, (and $\mathbb{N}$ is $f$-thick for any $f$) there exists $T \in \mathcal{X}$ so that $T$ has an $f$-thick mixing sequence for all increasing $f$. 
\end{proof}

\section{Proof of Proposition \ref{prop:no-jj-thick}}

To prove Proposition \ref{prop:no-jj-thick}, which requires showing that certain sequences can not be mixing sequences for IETs, we take advantage of a behavior at the opposite extreme of mixing sequences: rigidity sequences. We say $n_1,...$ is a \emph{rigidity sequence for $T$} if $T^{n_i}$ converges (in $L^1$) to the identity. Surprisingly, the typical measure preserving transformation not only has a mixing sequence, but it also has a rigidity sequence, which necessarily intersects its mixing sequence in at most a finite set. We will use a mild generalization of rigidity sequences, \emph{partial rigidity sequences}, to prove Proposition \ref{prop:no-jj-thick}. (Some IETs do not have rigidity sequences, let alone large enough rigidity sequences to rule out $f$-thick mixing sequences.) 


We say that $n_1,...$ is a \emph{$c$-partial rigidity sequence} if there exist measurable sets $A_1,...$ so that $\lambda(A_i)>c$ for all $i$ and 
$\underset{j \to \infty}{\lim}\,  \underset{x\in A_j}{\sup}\, d(T^{n_j}x,x)=0$.

\begin{lem} To prove Proposition \ref{prop:no-jj-thick}, it suffices to prove that for each minimal IET, $T$, there exists an integer $r>1$, a real number $c>0$, and a sequence $n_1,....,$ so that $n_{i+1}<r\,n_i$ for all $i$, and $n_1<...$ is a $c$-partial rigidity sequence for $T$. 
\end{lem}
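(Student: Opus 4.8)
The plan is to fix a minimal IET $T$, assume the asserted data exist — an integer $r>1$, a constant $c>0$, and a $c$-partial rigidity sequence $n_1<n_2<\cdots$ with $n_{i+1}<r\,n_i$ for all $i$ — and deduce that $T$ has no $j\to j^j$-thick mixing sequence, which is exactly Proposition \ref{prop:no-jj-thick}. So suppose, for contradiction, that $a_1<a_2<\cdots$ is a $j\to j^j$-thick mixing sequence for $T$.

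First I would exploit the bounded ratio growth of the partial rigidity sequence to force one of its terms into every sufficiently long block $[j,j^j]$. For $j>n_1$, let $i$ be minimal with $n_i\ge j$; then $i\ge 2$ and $n_{i-1}<j$, so $n_i<r\,n_{i-1}<r j\le j^j$ as soon as $j^{j-1}\ge r$, which holds for all large $j$. Since $\{a_i\}$ is $j\to j^j$-thick, $[j,j^j]\subset\{a_i\}$ for infinitely many $j$, and each such block (once $j$ is large) contains some term $n_{i(j)}$ of the partial rigidity sequence; because $n_{i(j)}\ge j\to\infty$, these terms take infinitely many distinct values, and sorting them yields a strictly increasing subsequence $m_1<m_2<\cdots$ of $\{n_i\}$ with every $m_k\in\{a_i\}$. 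This subsequence is a $c$-partial rigidity sequence for $T$ (restrict the witness sets), with witnesses $B_k$ satisfying $\lambda(B_k)>c$ and $\delta_k:=\sup_{x\in B_k}d(T^{m_k}x,x)\to 0$; and, being a subsequence of a mixing sequence, it is itself a mixing sequence for $T$.

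Second I would derive a contradiction from these two properties. Set $\epsilon=c/12$ (note $c<1$, so $\epsilon<1$), partition $[0,1)$ into $N=\lceil 1/\epsilon\rceil$ intervals $I_1,\dots,I_N$ of length at most $\epsilon$, and let $I'$ be the set of points at distance at most $\epsilon$ from $I_{p_0}$, so $\lambda(I')\le 3\epsilon$ for whichever index $p_0$ we choose. Since $\sum_{p}\lambda(B_k\cap I_p)=\lambda(B_k)>c$, some $I_{p(k)}$ carries mass $>c/N$; by pigeonhole a single $p_0$ works for infinitely many $k$, and we pass to that subsequence. For $k$ large, $\delta_k<\epsilon$, so $T^{m_k}$ maps $B_k\cap I_{p_0}$ into $I'$, whence $B_k\cap I_{p_0}\subseteq T^{-m_k}(I')\cap I_{p_0}$ and $\lambda\bigl(T^{-m_k}(I')\cap I_{p_0}\bigr)>c/N$. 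Letting $k\to\infty$ and using that $\{m_k\}$ is a mixing sequence, $\lambda\bigl(T^{-m_k}(I')\cap I_{p_0}\bigr)\to\lambda(I')\lambda(I_{p_0})\le 3\epsilon\cdot\epsilon=3\epsilon^2$, so $c/N\le 3\epsilon^2$. But $N<2/\epsilon$ gives $c/N>c\epsilon/2$, hence $c\epsilon/2<3\epsilon^2$, i.e. $c<6\epsilon=c/2$, a contradiction.

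I expect the one genuinely delicate point — the \emph{main obstacle} — to be the passage from the moving witness sets $A_i$ in the definition of partial rigidity to a single fixed set visibly incompatible with mixing; the pigeonhole over a fixed finite partition together with the observation that a nearly stationary piece of positive measure must stay inside a small fixed neighborhood is what makes this go through, and the calibration $\epsilon\sim c$ is precisely what closes the numerical gap. By comparison, the reduction step (landing a partial rigidity time inside $[j,j^j]$) is routine bookkeeping with the ratio bound $r$, and minimality of $T$ enters only through the hypothesis that supplies the triple $(r,c,(n_i))$.
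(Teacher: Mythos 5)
Your proposal is correct and follows essentially the same route as the paper: you use the ratio bound $n_{i+1}<r\,n_i$ to land a term of the partial rigidity sequence in each thick block $[j,j^j]$, extract a subsequence that is simultaneously a $c$-partial rigidity sequence and a mixing sequence, and then rule this out by pigeonholing the witness sets onto a fixed small interval whose slightly enlarged version has too much self-intersection under $T^{m_k}$ to be compatible with mixing. The only difference is cosmetic (a partition at scale $\epsilon=c/12$ versus the paper's dyadic intervals at scale $2^{-k}<c/4$), and you are in fact more explicit than the paper about the reduction step involving the thickness hypothesis.
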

\begin{proof}

To prove the lemma, it suffices to show that if $n_1,...$ is a $c$-partial rigidity sequence for $T$, then it cannot be contained in a mixing sequence. We now prove this. Let $2^{-k}<\frac 1 4 c$ and $\epsilon<\frac 1 {2^{k+4}}$. By the pigeonhole principle, for each $j$ there exists $b\in \{0,...,2^k\}$ (which depends on $j$) so that $\lambda\left(A_j \cap \left[\frac b {2^k},\frac {b+1}{2^k}\right]\right) \geq c 2^{-k}$. By the definition of $c$-partial rigidity, there exists a $j_0$ so that $d(T^{n_j}x,x)<\epsilon$ for all $x\in A_j$ and $j\geq j_0$. Now if $j>j_0$ and $x\in \left[\frac b {2^k},\frac {b+1}{2^k}\right]\cap A_j$ then $T^{n_j}x\in \left[\frac b {2^k}-\frac 1 {2^{k+4}},\frac {b+1}{2^k}+\frac 1 {2^{k+4}}\right]$. So 
\begin{multline*}\lambda\left(T^{n_j}\left( \left[\frac b {2^k}-\frac 1 {2^{k+4}},\frac {b+1}{2^k}+\frac 1 {2^{k+4}}\right]\right)\cap  \left[\frac b {2^k}-\frac 1 {2^{k+4}},\frac {b+1}{2^k}+\frac 1 {2^{k+4}}\right]\right)\geq \\
\lambda\left(A_j \cap  \left[\frac b {2^k},\frac {b+1}{2^k}\right]\right)>
\frac c {2^k}>2 \cdot \left(\frac 9 8 \cdot 2^{-k}\right)^2=2\lambda\left( \left[\frac b {2^k}-\frac 1 {2^{k+4}},\frac {b+1}{2^k}+\frac 1 {2^{k+4}}\right]\right)^2.
\end{multline*} 
There exists $b$ so that this occurs infinitely often, and so $n_1,...$ cannot be a mixing sequence. 
\end{proof}

 The next proposition, whose proof takes the remainder of the paper, completes the proof of Proposition \ref{prop:no-jj-thick}, and thus of Theorem \ref{thm:main}.

Notation: Let $|J|$ denote the length of an interval, and let $\lambda(J)$ denote the Lebesgue measure of a set. 
\begin{prop}\label{prop:keane-rigidity} Let $T$ be a $d$-IET  that is minimal. For any $\epsilon>0$ there exists $n_0$ so that for all $n\geq n_0$ there exists 
\begin{itemize}
\item $k \in [\frac{n}{20 \cdot d},20nd]$
\item $A \subset [0,1)$ with $\lambda(A)>\frac 1 {10^5d^5}$
\end{itemize}
so that for all $x\in A$, $d(T^kx,x)<\epsilon.$
\end{prop}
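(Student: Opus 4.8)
The plan is to exploit the Rauzy–Veech / Keane structure of minimal IETs, specifically the existence of towers (Rokhlin towers) coming from the first-return map to a shrinking sequence of subintervals, combined with the fact that under a minimal IET on $[0,1)$ the induced map on a small enough interval $J$ is again a $d'$-IET with $d' \le d+2$. Concretely, for each scale I would pick a subinterval $J$ obtained by Rauzy induction (or just by taking $J = [0,\delta)$ and inducing), look at the Rokhlin tower decomposition of $[0,1)$ into the first-return orbits over $J$: one gets finitely many (at most $d$) columns, column $i$ consisting of levels $J_i, TJ_i, \dots, T^{h_i-1}J_i$, all disjoint, covering $[0,1)$, with $\sum_i h_i |J_i| = 1$. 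The key point for partial rigidity is that on a single column, $T^{h_i}$ maps most of $J_i$ back into $J$ by a near-isometry (in fact the return map is an isometry on each of its continuity intervals), so the set $A$ will be built as a union of full levels $T^j J_i$ for $j$ ranging over a window, mapped by a \emph{fixed} power $T^k$.

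The heart of the argument is the following mechanism. Fix a column $i$ of height $h_i$ and a level index $j$. Then $T^{h_i}(T^j J_i)$ is $T^j$ applied to $T^{h_i} J_i \subset J$, so it lands back inside the column structure but shifted; iterating, the orbit of the bottom level $J_i$ under the return map $T^{h_i}$ stays in $J$ and is (piecewise) an isometry of $J$. To get genuine rigidity at a \emph{prescribed} time $k$ comparable to a given $n$, I would instead argue: among the points $x$ lying in levels $T^j J_i$ with $j$ in a window of length $\sim h_i$, a definite proportion satisfy $T^{h_i}x$ is within $|J|$ of $x$ (because $T^{h_i}$ moves $x = T^j y$ to $T^j(T^{h_i}y)$ with $T^{h_i}y, y \in J$, hence $|T^{h_i}x - x| \le |J|$, \emph{provided} $j$ is small enough that the intervals $T^jJ$ and $T^j(\text{image})$ have not yet been separated by discontinuities). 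This is exactly the statement that $T^{h_i}$ is $|J|$-close to the identity on a large union of levels; choosing $J$ with $|J| < \epsilon$ makes $d(T^{h_i}x, x) < \epsilon$. The measure lower bound $\lambda(A) > 1/(10^5 d^5)$ and the return time bound $k = h_i \in [n/(20d), 20nd]$ come from quantifying the tower combinatorics: some column $i$ satisfies $h_i |J_i| \ge 1/d$ (a ``fat'' column), and by choosing the inducing interval so that this fat column has height $h_i$ in the right range relative to $n$ — using the fact that as $J$ shrinks the heights grow without bound and grow by bounded-ratio steps (each Rauzy step changes one height by adding another, so heights roughly double over $d$ steps) — one lands in a window $[n/(20d), 20nd]$ for every large $n$. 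Then $A$ is taken to be the union of the bottom $h_i/2$ levels of that column (those $j$ small enough that no discontinuity interferes with the near-return), which has measure $\ge \frac 1 2 h_i |J_i| \gtrsim 1/d$, and I would absorb the crude losses from discontinuities and from the number of continuity intervals of the return map into the polynomial $d^{-5}$ bound.

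The main obstacle — and the step that needs the most care — is controlling which levels $T^j J_i$ actually get moved by less than $\epsilon$ under $T^{h_i}$. A priori $T^{h_i}$ restricted to the column is a near-isometry only on each of finitely many subintervals; a discontinuity of the return map inside $J$ propagates up the column and can spoil closeness on some levels. So I would need to: (a) bound the number of continuity intervals of the return map to $J$ (at most $d + (\text{number of IET discontinuities in }J) \le d + d$, if $J$ avoids IET discontinuities in its interior, which we may arrange); (b) on each continuity interval $J' \subset J$ of the return map, the set $\bigcup_{j < m} T^j J'$ with $m \le h_i$ is a genuine ``sub-tower'' on which $T^{h_i}$ is an honest isometry moving points by exactly $|T^{h_i}J' - J'| \le |J|$; (c) sum over the $O(d)$ continuity intervals and over the fat column. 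The bookkeeping to make all constants explicit and land inside $[n/(20d), 20nd]$ and $\lambda(A) > 10^{-5}d^{-5}$ is routine but fiddly; the conceptual content is entirely the tower/return-map picture above, which is valid for \emph{every} minimal IET precisely because minimality guarantees the return map to any subinterval is a well-defined IET with a controlled number of branches and heights tending to infinity.
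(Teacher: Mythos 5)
Your tower/return-map mechanism is essentially the paper's Lemma \ref{lem:easy rig}: induce on a small interval $J$, use that the return map is an IET on at most $d+2$ pieces with constant return time on each, and push a sub-tower of disjoint levels up by an isometry so that the return time $j$ moves every point of the sub-tower by at most $|J|<\epsilon$. That part of your outline is sound, and your bookkeeping for the measure of the fat column would go through.

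The genuine gap is the claim that the fat column's height can be placed in the window $[\frac{n}{20d},20nd]$ for \emph{every} large $n$ because ``heights grow by bounded-ratio steps.'' Each individual Rauzy step is additive, but the height of the column carrying measure $\geq 1/d$ can jump by an unbounded multiplicative factor between consecutive scales. The model case is a circle rotation with a huge partial quotient $a_{m+1}$: the fat tower has height about $q_m$ until $j\gtrsim a_{m+1}/d$ intermediate steps have passed, at which point it jumps to height about $q_{m+1}/d=a_{m+1}q_m/d$; any column whose height lies strictly between has measure $o(1)$, so for $n$ in the gap $(Cq_m,\, q_{m+1}/(Cd^2))$ no inducing interval gives you both a height in $[\frac{n}{20d},20nd]$ and a column of definite measure. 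This is precisely why the paper runs a dichotomy (Lemma \ref{lem:sizable}): either the disjointness hypothesis \ref{cond:disjoint} holds at scale $\sim \frac{1}{d^2n}$ and your argument applies, or else some power $T^k$ with $k\leq n/2$ returns an interval to itself with tiny displacement $0<T^k\delta-\delta<\frac{1}{10d^2n}$. In that second case one cannot use a single return time; instead the Sublemma in Lemma \ref{lem:possibility_2} shows $T^{ik}$ acts on a large union of partition elements as translation by $i(T^k\delta-\delta)$, so the iterate $T^{k\lfloor n/(2k)\rfloor}$ — a power manufactured to land near $n/2$ even though $k$ itself may be far below the window — still moves a set of measure $\gtrsim d^{-2}$ by less than $\epsilon$. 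Your proposal has no mechanism for this Liouville-type regime, and without it the statement ``for all $n\geq n_0$'' cannot be achieved; you would only get the conclusion along the (possibly very sparse) sequence of fat-column heights.
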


The plan for the rest of the paper is as follows: Our proof of Lemma \ref{lem:easy rig} establishes Proposition \ref{prop:keane-rigidity} under mild assumptions. It is reminiscent of \cite{kat no mix} and \cite{veech metric}. Lemmas \ref{lem:sizable} and \ref{lem:possibility_2} establish Proposition \ref{prop:keane-rigidity} when the mild assumptions do not hold, and show that in this case one can build a set $A$ by other means. 

Let $D$ be the set of discontinuities of $T$. Choose $n_0$ so that 
\begin{equation}\label{eq:n size} \bigcup_{i=0}^{\lfloor \frac{n_0}2\rfloor}T^{-i}D \text{ is $\epsilon$ dense.}
\end{equation}
Note that $n_0$ exists because by the assumption that $T$ is minimal, $\{T^{-i}x\}_{i\in \mathbb{N}}$ is dense for all $x$, and in particular if $\delta\in D$.

\begin{lem}\label{lem:easy rig} If there exists an interval $J$ so that 
\begin{enumerate}[label=(\alph*)]
\item\label{cond:cont} $T^{i}$ is continuous (and therefore an isometry) on $J$ for all $0\leq i\leq m$, and
\item \label{cond:disjoint} $T^iJ \cap J=\emptyset$ for all $0\leq i\leq m-2 $,
\end{enumerate}
then there exists $A$ with $\lambda(A)>|J|\frac {m-2} {2(d+2)}$ and $0< j\leq \frac{2}{ |J|}$ so that for all $x\in A$,
$$d(T^jx,x)<|J|.$$
\end{lem}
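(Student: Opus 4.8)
The plan is to exploit the two hypotheses of Lemma~\ref{lem:easy rig} to produce a single return time $j$ that works simultaneously for a large chunk of the orbit tower over $J$. Since $T^i$ is continuous on $J$ for $0 \le i \le m$, it acts as an orientation-preserving isometry there, so $T^iJ$ is an interval of length $|J|$ for each such $i$; and by hypothesis~\ref{cond:disjoint} the intervals $J, TJ, \dots, T^{m-2}J$ are pairwise disjoint. Thus we have a Rokhlin-type tower of height $m-1$ (or $m-2$, being careful at the endpoints) sitting over $J$, all of whose levels are genuine intervals of length $|J|$. The total measure of this tower is at least $(m-2)|J|$, but since $[0,1)$ has measure $1$ we must have $(m-2)|J| \le 1$, which already forces $m$ to be bounded in terms of $|J|$; we will need the tower to be ``tall relative to the number of discontinuities'' in order to find a good return.

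**Finding the return time.**

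First I would partition the index set $\{0,1,\dots,m-2\}$ according to the position of the left endpoint $\ell_i$ of $T^iJ$ relative to the $d$ continuity intervals of $T$: say $i \sim i'$ if $T^iJ$ and $T^{i'}J$ lie in the same continuity interval of $T$ \emph{and} are translates of each other by the same amount under one more application of $T$ — more simply, group the levels $T^iJ$ by which of the at most $d+1$ intervals between consecutive points of $D$ (together with the endpoints of $[0,1)$) they sit in. By pigeonhole, one such group $G$ contains at least $\frac{m-2}{d+2}$ indices (using $d+2$ to absorb the two boundary intervals of $[0,1)$). Now take $i < i'$ both in $G$ with $i$ minimal and $i'$ the next element; set $j = i' - i > 0$. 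Because all levels $T^iJ, \dots, T^{i'}J$ along the way lie in the tower and $T$ is an isometry on each (they avoid $D$), the map $T^j$ restricted to $T^iJ$ is an isometry onto $T^{i'}J$, and since both $T^iJ$ and $T^{i'}J$ sit in the same short interval of the $D$-partition, the displacement $T^jx - x$ has size at most the length of that interval. But I do not get to choose the $D$-partition to have small mesh; instead I should iterate: among the $\ge \frac{m-2}{d+2}$ levels in $G$, at least half the \emph{consecutive pairs} have the two levels overlapping or within $|J|$ of each other (otherwise the levels would spread out over a set of measure $> (\text{number of pairs}) \cdot |J|$, contradicting that they all sit inside an interval of length $\le 1$). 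Taking such a consecutive pair gives $j = i'-i$ with $d(T^jx,x) < |J|$ for all $x$ in a level $T^iJ$; and the collection of such good levels — one from roughly half of the $\ge \frac{m-2}{d+2}$ pairs in $G$, but we may further need all pairs to share the \emph{same} value of $j$ — is where the counting must be done carefully. Concretely, set $A$ to be the union of all levels $T^iJ$ ($0 \le i \le m-2$) for which there exists $i'$ in the same $D$-interval with $0 < i'-i$ bounded and $|T^iJ$ within $|J|$ of $T^{i'}J|$; a clean double-counting shows $\lambda(A) \ge |J| \cdot \frac{m-2}{2(d+2)}$, and since the relevant $j = i'-i$ satisfies $j \cdot |J| \le \sum_{\text{levels}} |T^iJ| \le $ (measure covered) $\le 2$, we get $j \le \frac{2}{|J|}$.

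**The main obstacle.**

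The delicate point — and the one I expect to require the most care — is arranging that a \emph{single} value of $j$ works for a set $A$ of the claimed measure, rather than a different return time for each level. The honest fix is the pigeonhole on the $D$-partition followed by a second pigeonhole (or an averaging argument) on the return differences: within one $D$-interval the levels are all translates, so $T^jx - x$ is the \emph{same constant} $\gamma_j$ for every $x$ in a level and depends only on $j$; then ``$d(T^jx,x)<|J|$'' becomes ``$|\gamma_j| < |J|$'', a condition on $j$ alone, and the levels sharing a fixed good $j$ automatically stack up to give a large $A$. Making the constants $\frac{m-2}{2(d+2)}$ and $\frac{2}{|J|}$ come out exactly as stated — as opposed to something weaker — is just bookkeeping: one tracks that the $\le d$ points of $D$ cut $[0,1)$ into $\le d+1$ pieces, that the two tower-boundary levels $i = 0$ and $i = m-1$ may need to be discarded (costing the $m-2$), and that the factor $2$ accommodates splitting ``$j$ positive'' from ``$j$ negative'' displacements. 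I would write the argument by first fixing the $D$-interval containing the most levels, then observing the displacement-constant dichotomy, then defining $A$ and $j$ explicitly and verifying the two inequalities by the measure bound $\sum |T^iJ| \le 1$.
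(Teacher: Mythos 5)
Your approach has a fatal structural flaw, and it is not the approach the paper takes. You try to find the return time $j$ by locating two tower levels $T^iJ$ and $T^{i'}J$ with $0\le i<i'\le m-2$ that are ``overlapping or within $|J|$ of each other,'' and then set $j=i'-i$. But hypothesis \ref{cond:disjoint} forces $T^{i'-i}J\cap J=\emptyset$, hence $T^iJ\cap T^{i'}J=\emptyset$: the levels are \emph{pairwise disjoint} intervals, each of length $|J|$ because $T^i$ is an isometry on $J$. Two disjoint intervals of length $|J|$ have left endpoints at distance at least $|J|$, and since $T^{i'-i}$ acts on $T^iJ$ as the translation by exactly that difference of left endpoints, you get $d(T^{i'-i}x,x)\ge|J|$ for every $x\in T^iJ$ and every such pair. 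So no pair of tower levels is ever ``close'' in the sense you need, zero pairs overlap, and your counting claim (``at least half the consecutive pairs...'') is false; the measure estimate you invoke is not contradicted by the levels spreading out, since the tower has total measure $(m-1)|J|\le 1$ automatically. The hypotheses actively prevent any $j\le m-2$ from working: the good $j$ must be \emph{larger} than $m-2$, which is why its upper bound $2/|J|$ has nothing to do with $m$.

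The missing idea is the Poincar\'e first return map of $T$ to $J$ itself. By the standard fact that the return map of a $d$-IET to an interval is an IET on at most $d+2$ subintervals with constant return time on each, one argues: the set of $x\in J$ with return time at most $2/|J|$ has measure at least $|J|/2$ (else its forward images up to time $2/|J|$ would be disjoint and exceed total measure $1$); hence one of the $\le d+2$ subintervals, call it $J'$, has measure at least $\tfrac{|J|}{2(d+2)}$ and a single constant return time $j\le 2/|J|$ with $T^jJ'\subset J$, giving $d(T^jx,x)\le|J|$ on $J'$. One then takes $A=\bigcup_{i=0}^{m-2}T^iJ'$ and uses that $T^i$ is an isometry on $J$ to transport the estimate $d(T^jx,x)\le|J|$ from $J'$ to each level $T^iJ'$; disjointness of the levels gives $\lambda(A)\ge(m-1)\tfrac{|J|}{2(d+2)}$. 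Your own text flags the single-$j$ problem as ``the main obstacle'' and defers it to an unspecified double-counting; in fact it cannot be fixed within your framework, because the object you are pigeonholing over (pairs of tower levels) contains no admissible candidates at all.
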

To prove this result we use the \emph{Poincar\'e first return map}, a standard construction in ergodic theory. 
Let $(X,\mathcal{M},\nu,S)$ be a probability measure preserving dynamical system and let $A \in \mathcal{M}$. For $x\in A$ the \emph{first return time} of $x$ to $A$ is 
$n_A(x)=\min\{j>0:S^jx \in A\}$  and  by the Poincar\'e recurrence theorem, \cite[Theorem 1, \S 1.1]{CFS} this is defined for $\nu|_A$ almost every $x$. The \emph{first return map} of $S$ to $A$ is $x \to S^{n_A(x)}x$  is measurable and $\nu|_A$ measure preserving \cite[\S 1.5]{CFS}.
In the proof of Lemma \ref{lem:easy rig} we use the following standard result about IETs: \\

\begin{lem}\label{lem:CFS}\cite[Lemma 2, \S 5.3]{CFS} If $T$ is an IET on $d$ intervals, and $I$ is an interval, then the first return map of $T$ to $I$ is an IET on at most $d+2$ intervals. Moreover, on each interval the first return time is constant. 
\end{lem}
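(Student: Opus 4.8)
The plan is to analyze the first return map $T_I$ directly as a piecewise isometry of $I$ and to bound its number of continuity intervals by tracking where the forward excursions of points of $I$ meet the ``singular data'' of the pair $(T,I)$. Write $I=[a,b)$, let $D=\{c_1<\cdots<c_{d-1}\}$ be the interior discontinuities of $T$ (the left endpoints of the $d$ exchanged intervals), and for $x\in I$ let $n(x)=n_I(x)$ be the first return time, defined almost everywhere by Poincar\'e recurrence.

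First I would establish that $T_I$ is a piecewise orientation-preserving isometry. Fix $x\in \mathrm{int}(I)$ whose finite orbit segment $x,Tx,\dots,T^{n(x)-1}x$ avoids $D$; all but finitely many $x$ have this property. For such $x$, each factor in the composition $T^{n(x)}=T\circ\cdots\circ T$ is a translation on a neighborhood of the relevant point, so $T_I=T^{n(x)}$ agrees with a single translation on a neighborhood of $x$ in $I$, and $n$ is locally constant there. Since $T$ is a bijection of $[0,1)$ and returns are defined a.e., $T_I$ is injective a.e. Thus $T_I$ is a translation on each maximal subinterval of $I$ on which the pair (return time, itinerary $(j_0,\dots,j_{n-1})$ recording which exchanged interval contains each $T^i x$) is constant; these subintervals are the candidate intervals of the induced IET, and by construction the return time is constant on each, which gives the ``moreover'' clause for free.

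Next I would bound the number of these subintervals by locating their shared interior endpoints (the cut points of $T_I$). A point $p\in\mathrm{int}(I)$ is a cut point only if this data changes across $p$, which forces the excursion of $p$ to diverge at some first step $i$: either $T^i p$ equals a discontinuity $c_j$ of $T$ (the two sides enter different exchanged intervals at step $i$), or $T^i p$ equals an endpoint $a$ or $b$ of $I$ (one side returns to $I$ at step $i$ while the other does not). Hence every interior cut point $p$ carries a label $q=T^i p\in D\cup\{a,b\}$ together with the minimal divergence step $i$, where the segment $T^\ell p$ ($0<\ell<i$) stays outside $\mathrm{int}(I)$. The crux is then a uniqueness statement: for each fixed label $q$ there is at most one cut point carrying it. I would prove this by running the argument backwards, tracking the backward orbit $q,T^{-1}q,T^{-2}q,\dots$ (legitimate since $T^{-1}$ is again an IET) and showing that the constraint ``$T^{-m}q\notin \mathrm{int}(I)$ for $0<m<i$ and $T^{-i}q\in I$'' singles out the first backward entry of $q$ into $I$, which is unique because $T$ is injective. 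Since $|D\cup\{a,b\}|=(d-1)+2=d+1$, there are at most $d+1$ interior cut points, so $T_I$ has at most $d+2$ intervals, as claimed.

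The main obstacle I anticipate is exactly this uniqueness bookkeeping, and in particular handling the endpoints of the half-open interval $I=[a,b)$ correctly: because $a\in I$ but $b\notin I$, the notion of ``first backward entry'' and the distinction between a genuine return and a near-miss must be set up so that each endpoint contributes at most one interior cut point and no double counting occurs between the two endpoints and the discontinuities $c_j$ (for instance when some $c_j$ happens to lie in $\bar I$, or when an excursion lands exactly on an endpoint). Once the assignment $p\mapsto q$ is shown to be injective into a set of size $d+1$, the count is immediate; the remaining verifications, that $T_I$ is orientation-preserving and genuinely exchanges the resulting $\le d+2$ subintervals, are routine.
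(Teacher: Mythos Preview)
Your proposal is correct and follows essentially the same route as the paper's sketch: cut points of the induced map arise only when a forward excursion hits a discontinuity of $T$ or an endpoint of $I$, and injectivity of $T$ ensures that each such singular point contributes at most one cut before first return, giving at most $(d-1)+2=d+1$ interior cuts and hence at most $d+2$ intervals. The paper gives only a three-sentence sketch of this argument; your backward-orbit uniqueness argument simply makes explicit the phrase ``the injectivity of $T$ implies that each discontinuity can only cut $I$ once before first return,'' so the two accounts coincide in substance.
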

The last claim in the lemma is established in the last two sentences in the proof of \cite[Lemma 2, \S 5.3]{CFS}. 
We sketch the idea of the proof of the lemma. Consider how $T^j$ can become discontinuous on a subinterval of $I$. It must map $I$ to a discontinuity of $T$. Because we are examining a first return map, the injectivity of $T$ implies that each discontinuity can only cut $I$ once before first return. The endpoints of $I$ can also cause discontinuities of the return time function, which can create two more cuts. 
\begin{proof}[Proof of Lemma \ref{lem:easy rig}]
Consider the first return map of $T$ to $J$. 
It consists of at most $d+2 $ intervals, and on each such interval, the return time is constant. 
Consider the set $S$ of points whose return time is at most $\frac 2 {|J|}$. 
$S$ has measure at least $\frac{|J|}2$,  because otherwise $\lambda\left(\bigcup_{i=0}^{\frac 2 {|J|}}T^i\left(\left\{x:n_J(x)>\frac 2 {|J|}\right\}\right)\right)\geq \left(\frac 2 {|J|}+1\right)\frac {|J|}2>1$. 
$S$ is divided into at most $d+2$ intervals, so one of them has measure at least $\frac{|J|}2\frac 1 {d+2}$.  Let $J'$ be such an interval.

We claim that we can choose $A$ to be $\cup_{i=0}^{m-2}T^i(J')$, and $j$ to be the return time of $J'$ to $J$. 
Indeed if $x \in J'$, then 
\begin{equation}\label{eq:return}T^jx\in T^jJ'\subset J \text{ and so }d(T^jx,x)\leq |J|.
\end{equation} 

Also if $x\in T^i J'$ for some  $0\leq i\leq m$, then $T^{-i}x\in J'$, and \eqref{eq:return} implies \mbox{$T^jT^{-i}x\in J$}, and so 
$$ d(T^jT^{-i}x,T^{-i}x)\leq|J|$$
$T^i$ is an isometry on $J$ (by \ref{cond:cont}), giving
\begin{equation}\label{eq:up tower} d(T^jx,x)\leq |J|
\end{equation}
 Because $J,...,T^{m-2}J$ are disjoint, we have that $J',...,T^{m-2}J'$ are too, establishing  $\lambda(A)\geq (m-1)\frac{|J|}2\frac 1 {d+2}$ and the lemma. 
\end{proof}

The idea of  \eqref{eq:up tower}, that is, using that powers of $T$ are an isometry on $J$ to establish \eqref{eq:return} for images of $J$, will be used frequently below.

\begin{figure}[!h]
\begin{center}
\includegraphics[width=0.8\textwidth]{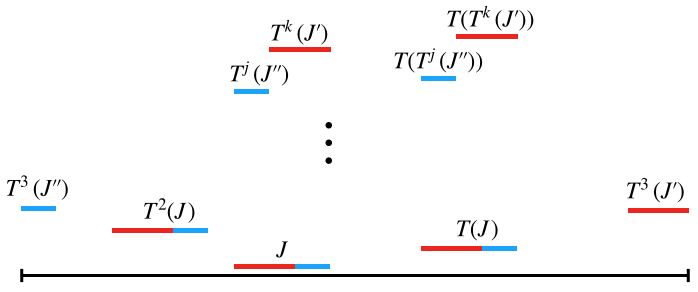}
\caption{The construction in the proof of Lemma \ref{lem:easy rig}}
\label{fig:disjoint_iterates}
\end{center}
\end{figure}

We will see that we can arrange $J$ and $P_S$ satisfying $|J|>\frac 1 {10d^2n}$  and Lemma \ref{lem:easy rig} \ref{cond:cont} with  $m=n$. We now show that even if such an interval $J$ doesn't satisfy Lemma \ref{lem:easy rig} \ref{cond:disjoint},  we can still prove Proposition  \ref{prop:keane-rigidity}. 

Recall that  $ D=\delta_1,...,\delta_{d-1}$ is the set of discontinuities of $T$. Let $S=\cup_{i=0}^n T^{-i}D$. 
Let $\mathcal{P}_S$ be the partition of $[0,1)$ into left closed, right open half intervals whose endpoints are in $S\cup\{0,1\}$. 
Observe that any element of $\mathcal{P}_S$ satisfies Lemma \ref{lem:easy rig} \ref{cond:cont}. Given an ordered pair $(\delta,\delta')\in D$ let $\mathcal{P}_n(\delta,\delta')$ be the set of partition elements with left endpoint $a$ and right endpoint $b$ so that $T^{i}a=\delta$ and $\underset{x \to b^{-}}{\lim}\, T^jx=\delta'$ for some $0\leq i,j\leq n$. Note that because our intervals of continuity are right open, we require the limit as $x$ goes to $b$ from the left in place of $b$. 

\begin{definition*}
 We say that an ordered pair 
$(\delta, \delta')$ is \emph{n-sizable} if $$\lambda(\cup_{J \in \mathcal{P}_n(\delta,\delta')}J)\geq \frac 1 {(d-1)^2}.$$
\end{definition*}
By the pigeonhole principle, for every $n$ there is an $n$-sizable pair. 


\begin{lem}\label{lem:sizable} If $(\delta,\delta')$ is $n$-sizable, then one of the following holds:
\begin{enumerate}
\item\label{possibility1} There exists $J$ as in Lemma \ref{lem:easy rig} with $m=\lfloor \frac n 2 \rfloor$ so that $|J|>\frac 1 {10}\frac 1 {d^2}\frac 1 n$, or
\item \label{possibility2} there exists $0< k\leq  \frac n 2$ so that one of the following holds: 

$0< T^k\delta-\delta<\frac 1 {10}\frac 1{d^2n}$ \qquad or \qquad $0<\delta'-\underset{x \to \delta'^+}{\lim} T^kx<\frac 1 {10}\frac 1{d^2n}$. 
\end{enumerate}
\end{lem}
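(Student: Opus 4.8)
The plan is to pigeonhole inside $\bigcup_{J\in\mathcal{P}_n(\delta,\delta')}J$ a single interval $J$ that is not too short and on which $T^0,\dots,T^n$ all act as orientation-preserving isometries, and then to dichotomize on how long the forward orbit of $J$ stays disjoint from itself: a long disjoint orbit is exactly possibility \ref{possibility1}, while a quick self-overlap $T^iJ=J+c$ will be pushed, through the iterate $T^{i_0}$ that carries the left endpoint of $J$ onto $\delta$, to a near-return $T^i\delta=\delta+c$, which after a size check yields possibility \ref{possibility1} (via a shorter interval with a longer disjoint orbit) or possibility \ref{possibility2}. For the pigeonholing: each element of $\mathcal{P}_n(\delta,\delta')$ has left endpoint $T^{-i}\delta$ for some $0\le i\le n$ and distinct partition elements have distinct left endpoints, so $\#\mathcal{P}_n(\delta,\delta')\le n+1$; since the union has measure at least $\tfrac1{(d-1)^2}$, some $J=[a,b)\in\mathcal{P}_n(\delta,\delta')$ has $|J|\ge\tfrac1{(d-1)^2(n+1)}>\tfrac1{10d^2n}$. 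Fix $0\le i_0,j_0\le n$ with $T^{i_0}a=\delta$ and $\lim_{x\to b^-}T^{j_0}x=\delta'$. As $J\in\mathcal{P}_S$, for $0\le r\le n$ the map $T^r$ is a continuous orientation-preserving isometry on $J$ and $T^rJ$ is an interval of length $|J|$.

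If $J,TJ,\dots,T^{\lfloor n/2\rfloor-2}J$ are pairwise disjoint, then $J$ satisfies the hypotheses of Lemma \ref{lem:easy rig} with $m=\lfloor n/2\rfloor$ and $|J|>\tfrac1{10d^2n}$, so possibility \ref{possibility1} holds. Otherwise let $i$ be least with $1\le i\le\lfloor n/2\rfloor-2$ and $T^iJ\cap T^{i'}J\neq\emptyset$ for some $0\le i'<i$; applying the bijection $T^{-i'}$ of $[0,1)$ gives $T^{i-i'}J\cap J\neq\emptyset$ with $i-i'<i$, so minimality of $i$ forces $i'=0$. Hence $T^iJ\cap J\neq\emptyset$ while $J,TJ,\dots,T^{i-1}J$ are pairwise disjoint (so $i|J|\le1$). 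Since $T^i$ is an orientation-preserving isometry on $J$, $T^iJ=J+c$ for some $c$; and $T^iJ\neq J$, as $T^iJ=J$ would make $T^i$ the identity on the interval $J$, contradicting minimality of $T$. Thus $0<|c|<|J|$.

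Assume $c>0$ (the case $c<0$ is symmetric, using the right endpoint $b$, the iterate $T^{j_0}$ and the discontinuity $\delta'$ in place of $a$, $T^{i_0}$ and $\delta$). Since $a,a+c\in J$ and $T^i$ is translation by $c$ on $J$, $T^ia=a+c$; using that $T^{i_0}$ commutes with $T^i$ and is an isometry on $J$,
$$T^i\delta=T^iT^{i_0}a=T^{i_0}T^ia=T^{i_0}(a+c)=T^{i_0}a+c=\delta+c.$$
If $c<\tfrac1{10d^2n}$, this is possibility \ref{possibility2} with $k=i$. If $c\ge\tfrac1{10d^2n}$, set $J'=[a,a+c)\subset J$, of length $c$. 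Iterating ``$T^i$ is translation by $c$ on $J$'' shows $T^{\ell i}$ is translation by $\ell c$ on $J'$ for $0\le\ell\le\lfloor|J|/c\rfloor-1$, so the translates $J',T^iJ',\dots,T^{(\lfloor|J|/c\rfloor-1)i}J'$ are consecutive subintervals of $J$, hence pairwise disjoint; combining this with the disjointness of $J,TJ,\dots,T^{i-1}J$ (writing an index as $\ell i+s$, $0\le s<i$, and using disjointness of $T^sJ,T^{s'}J$ when $s\neq s'$ and injectivity of $T^s$ on $J$ when $s=s'$) shows $J',TJ',\dots,T^{N}J'$ are pairwise disjoint with $N+1=i\lfloor|J|/c\rfloor$. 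If $N\ge\lfloor n/2\rfloor-2$, then $J'$ — of length $c\ge\tfrac1{10d^2n}$, with $T^0,\dots,T^{\lfloor n/2\rfloor}$ isometries on it since $J'\subset J$ — witnesses possibility \ref{possibility1}.

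The one configuration not yet covered is $c\ge\tfrac1{10d^2n}$ with $i$ small and $c$ close to $|J|$, so that $i\lfloor|J|/c\rfloor<\lfloor n/2\rfloor-1$ and the orbit of $J'$ is too short. This is the main obstacle: the elementary estimates only return $\delta$, or intervals abutting $\delta$, to within $|c|$ or $|J|$ of themselves, which need not beat $\tfrac1{10d^2n}$. The plan here is to bring in the Poincar\'e return map $R$ of $T$ to $J$ — an IET on at most $d+2$ intervals by Lemma \ref{lem:CFS} — whose structure is pinned down by the preceding step: the points of $[a,b-c)$ have return time exactly $i$ and $R$ translates them by $c$, while $[b-c,b)$ has larger return time and $R$ maps it onto $[a,a+c)$ as an IET on at most $d+1$ intervals. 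Running a Dirichlet/three-distances-type argument on this structured near-rotation should produce, within at most $n/2$ iterates of $T$ (converting a number of returns into a number of $T$-steps via the fact that a return to $J$ costs about $1/|J|$ steps), either a subinterval of $J$ of length exceeding $\tfrac1{10d^2n}$ with a disjoint $T$-orbit of length at least $\lfloor n/2\rfloor-2$ or an iterate $T^k$, $k\le n/2$, returning $\delta$ (respectively the relevant one-sided limit at $\delta'$) to within $\tfrac1{10d^2n}$. Pushing the error below $\tfrac1{10d^2n}$ rather than merely below $|J|$ while staying inside the iterate budget is what the constants $\tfrac1{10d^2n}$ and $k\le n/2$ are calibrated for, and is the step I expect to require the most care.
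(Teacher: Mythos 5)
Your argument is correct up to and including the transport step $T^i\delta=\delta+c$ (the pigeonhole bound $\#\mathcal{P}_n(\delta,\delta')\le n+1$, the reduction of the first overlap to $T^iJ\cap J\neq\emptyset$, and the commutation $T^i\delta=T^{i_0}T^ia=\delta+c$ are all fine), but the case you flag at the end — $c\ge\frac1{10d^2n}$ with $i\lfloor|J|/c\rfloor<\lfloor n/2\rfloor-1$ — is a genuine unresolved gap, not a technicality: it really occurs (think of $T$ acting on a long partition element like a rotation with a large step $c$ comparable to $|J|$), and in it your argument produces neither a long disjoint orbit nor a displacement below the threshold. The proposed continuation via the return map to $J$ and a three-distance analysis is only a plan, and the quantitative step you yourself identify as delicate (beating $\frac1{10d^2n}$ within $n/2$ iterates) is exactly what is missing, so the proof is incomplete as written.

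The reason the paper never meets this case is a different (and simpler) choice of interval: instead of running the dichotomy on the whole partition element $J=[a,b)$, it runs it on the fixed-length subinterval $\big[a,a+\frac1{10}\frac1{d^2n}\big)$ anchored at the endpoint $a$ whose orbit hits $\delta$ (and, for the other sign of the displacement, it uses $b$ and $\delta'$). Since this subinterval sits inside a single element of $\mathcal{P}_S$, all $T^r$, $0\le r\le n$, are isometries on it; if it is disjoint from its first $\lfloor n/2\rfloor$ iterates it itself witnesses possibility (i), and if instead $T^k$ overlaps it with itself for some $0<k<\frac n2$, the displacement is automatically smaller than its length $\frac1{10}\frac1{d^2n}$, so the same transport computation you performed gives possibility (ii) immediately. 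In other words, by calibrating the interval's length to the target $\frac1{10}\frac1{d^2n}$, the "large $c$" branch of your case analysis cannot arise, and no return-map or three-distance argument is needed. Your proof can be repaired with this one change while keeping everything else you wrote.
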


\begin{proof}
Consider the following condition:
\begin{enumerate}
\item[\emph{(i')}] \label{possibility3} There exists $J'=[a,b) \in \mathcal{P}_n(\delta,\delta')$ with  $|J'|> \frac 1 {10}\frac 1{d^2n}$ and so that  $$T^{i}\left[a,a+\frac 1 {10}\frac 1{d^2n}\right) \cap \left[a,a+\frac 1 {10}\frac 1 {d^2n}\right)= \emptyset$$ for all $0<i<\frac n 2 $.
\end{enumerate}

Note that a $J'\in \mathcal{P}_n(\delta,\delta')$ exists satisfying $|J'|> \frac 1 {10}\frac 1{d^2n}$ by our assumption that $(\delta,\delta')$ is $n$-sizable and contains at most $n$ intervals. Clearly (i') implies (i). We will show that if (i') fails, then \eqref{possibility2} holds, so that combining these gives us that if \eqref{possibility1} fails, then \eqref{possibility2} holds.  

Let $J=[a,a+\frac 1 {10}\frac 1{d^2n})$ and assume that $T^kJ \cap J\neq \emptyset$ for some $0<k<\frac n 2 $. It follows that $|T^kx-x| <\frac 1 {10}\frac 1{d^2n}$ for all $x \in J$. For concreteness let's assume \mbox{$0<T^kx-x<\frac 1 {10}\frac 1{d^2n}$}. Now $T^ia=\delta$ for some $i\leq n$. As $T^ka\in J$ (by our concreteness assumption) and $T^i$ is an isometry on $J$, $T^kT^ia=T^iT^ka=T^ia+(T^ka-a)$. As $T^ia=\delta$, we have \eqref{possibility2}.

The case of $0>T^kx-x>-\frac 1 {10}\frac 1{d^2n}$ is the same with $T^k\delta-\delta$ replaced by $\delta'-\underset{z \to \delta'^{-}}{\lim}\, T^kz$. 
\end{proof}

Note that the condition $k\leq n/2$ is used later; see \eqref{eq:use_n2}.

So we have that absence of \eqref{possibility2} implies the existence of an interval $J$ satisfying \ref{cond:disjoint} and $|J|>\frac 1 {10}\frac 1{d^2n}$. As $J\subset J'\in \mathcal{P}_S$ we have \ref{cond:cont}, establishing the assumptions of Lemma \ref{lem:easy rig}.

\begin{lem}\label{lem:possibility_2} 
Let $\epsilon$ be as in Proposition \ref{prop:keane-rigidity}.
Under possibility (ii) of Lemma~\ref{lem:sizable} and assuming $$\max_{I \in \mathcal{P}_n(\delta,\delta')} \, |I|<\epsilon,$$  there exist $k$, $A$ as in Proposition~\ref{prop:keane-rigidity}. 
\end{lem}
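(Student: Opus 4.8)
The plan is to exploit possibility (ii): we have a displacement $0 < T^k\delta - \delta < \frac{1}{10 d^2 n}$ (or the analogous inequality at $\delta'$) for some $0 < k \le n/2$, and we want to promote this single near-return of a discontinuity into a positive-measure set $A$ on which $T^k$ moves points by less than $\epsilon$. The natural candidate for $A$ is built from the partition elements of $\mathcal{P}_n(\delta,\delta')$, whose total measure is at least $\frac{1}{(d-1)^2}$ by $n$-sizability. The key observation is that each $J = [a,b) \in \mathcal{P}_n(\delta,\delta')$ has, by definition, some $0 \le i \le n$ with $T^i a = \delta$, and $T^i$ is an isometry on $J$ (condition \ref{cond:cont} holds for elements of $\mathcal{P}_S$); so $T^i J$ is an interval abutting $\delta$ on the right. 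The segment of $\delta$'s forward orbit up to time $k$ — here is where $k \le n/2$ matters, cf.\ \eqref{eq:use_n2}, so that $i + k \le n + n/2$ stays in a controllable range — then carries $T^i J$ to a nearly-returned copy, and pulling back by $T^{-i}$ (again using that the relevant powers are isometries on the relevant images) shows $T^k$ displaces points of $J$ by at most $|T^k\delta - \delta| + (\text{stuff})$.

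Concretely, first I would fix an $n$-sizable pair $(\delta,\delta')$ and the integer $k$ from possibility (ii); say we are in the case $0 < T^k\delta - \delta < \frac{1}{10 d^2 n}$. For each $J \in \mathcal{P}_n(\delta,\delta')$ pick the witnessing power $i = i(J) \le n$ with $T^i a = \delta$ where $a$ is the left endpoint of $J$. Second, I would argue that $T^i$ is an isometry on all of $J$: indeed $J \subset J'$ for some $J' \in \mathcal{P}_S$, and elements of $\mathcal{P}_S$ satisfy Lemma \ref{lem:easy rig}\ref{cond:cont} for all powers up to $n$. Hence $T^i J = [\delta, \delta + |J|)$. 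Third — the crucial step — I would need that $T^k$ is continuous (an isometry) on the interval $[\delta, \delta + |J|)$, or at least on a large subinterval of it; this is where the hypothesis $\max_{I \in \mathcal{P}_n(\delta,\delta')} |I| < \epsilon$ and the choice of $n_0$ from \eqref{eq:n size} enter. If $T^k$ is an isometry on $T^i J$, then for $x \in J$ we get $T^k T^i x = T^i x + (T^k \delta - \delta)$, and then applying $T^{-i}$ (isometry on the appropriate image interval) yields $d(T^k x, x) = T^k\delta - \delta < \frac{1}{10 d^2 n} < \epsilon$ for all $x \in J$. Taking $A = \bigcup_{J \in \mathcal{P}_n(\delta,\delta')} J$ gives $\lambda(A) \ge \frac{1}{(d-1)^2} > \frac{1}{10^5 d^5}$, and $k \in (0, n/2] \subset [\frac{n}{20d}, 20 n d]$ — wait, the lower bound; one would instead relate this $k$ to the $n$ in Proposition \ref{prop:keane-rigidity} appropriately, or observe $k$ itself can be taken as the target and rescale.

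The main obstacle I anticipate is exactly the third step: there is no reason $T^k$ should be continuous on the whole interval $T^i J = [\delta, \delta + |J|)$, since forward iterates of a discontinuity can cut it. The resolution should be to subdivide: $T^i J$ is cut by $\bigcup_{j=0}^{k} T^{-j} D$ into subintervals, on each of which $T^k$ is an isometry; on each such subinterval the above computation goes through verbatim, displacing points by $T^k\delta - \delta$ only if that subinterval is the one adjacent to $\delta$, and otherwise by $T^k\delta - \delta$ plus the distance the relevant orbit segment of a cut point travels — which we do \emph{not} control. So the honest plan is: restrict $A$ to those $J$ (or sub-pieces of $T^i J$ pulled back) where $T^k$ is genuinely a translation by the small amount $T^k\delta - \delta$. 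The counting then becomes: the bad pieces are those hit by one of the at most $(d-1) \cdot k \le (d-1) n/2$ orbit segments $\{T^j \delta_s : 0 \le j \le k\}$, cutting $T^i J$ into at most $O(dn)$ good subintervals; since the good subintervals of all the $T^i J$ together still have total measure $\ge \frac{1}{(d-1)^2}$ (cutting does not destroy measure), and pulling them all back by the respective $T^{-i}$ is measure-preserving and, by injectivity-type arguments, essentially disjoint, we retain a set of measure bounded below by an absolute constant over a power of $d$. Matching that constant to $\frac{1}{10^5 d^5}$ and the interval of allowed $k$ to $[\frac{n}{20d}, 20nd]$ is then the routine bookkeeping — the $\max |I| < \epsilon$ hypothesis ensures the copies $T^i J$, hence the displacement bound, stay below $\epsilon$ without further work. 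The symmetric case starting from $\delta'$ and the limit from the left is handled identically.
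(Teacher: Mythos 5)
There is a genuine gap, and you half-noticed it yourself: the integer $k$ produced by possibility (ii) of Lemma \ref{lem:sizable} is only known to satisfy $0<k\le n/2$, while Proposition \ref{prop:keane-rigidity} demands a power lying in the window $[\frac{n}{20d},20nd]$. That window is not cosmetic: the reduction of Proposition \ref{prop:no-jj-thick} to Proposition \ref{prop:keane-rigidity} needs consecutive partial rigidity times to have bounded ratio, so you cannot ``take $k$ itself as the target and rescale'' --- the conclusion must hold for every large $n$ with a power comparable to $n$, and possibility (ii) might only ever hand you $k=1$. The fix, which is the actual heart of the paper's proof and is absent from your proposal, is to iterate: a sublemma shows that if $J\in\mathcal{P}_n(\delta,\delta')$ and $T^kJ\cap J\neq\emptyset$, then $T^{ik}J=i(T^k\delta-\delta)+J$ for all $ik<n$, i.e.\ the small translation persists under repeated application of $T^k$ (an induction; continuity of $T^{ik}$ on $J$ for $ik\le n$ is automatic from $J\in\mathcal{P}_S$, so your worry about continuity of $T^k$ on $T^iJ$ evaporates if you work on $J$ itself rather than on its image at $\delta$). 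One then uses the power $k\lfloor\frac{n}{2k}\rfloor\approx n/2$, which does lie in the required window, takes $A=\bigcup_{J\in\mathcal{B}}\bigcup_{i=0}^{\lfloor n/(2k)\rfloor}T^{ik}J$ where $\mathcal{B}$ is the set of $J$ with $T^kJ\cap J\neq\emptyset$, and bounds the accumulated displacement $\lfloor\frac{n}{2k}\rfloor(T^k\delta-\delta)$ by the diameter of $\bigcup_iT^{ik}J$, which is less than $\epsilon$ because that union avoids $\bigcup_{j\le n/2}T^{-j}D$; this is exactly where \eqref{eq:use_n2} and \eqref{eq:n size} are used.

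Your fallback plan also has a broken measure count. After cutting $T^iJ$ by $\bigcup_{j=0}^{k}T^{-j}D$, only the piece with left endpoint $\delta$ is guaranteed to be translated by $T^k\delta-\delta$; the remaining pieces move by amounts you concede you do not control, yet your count then treats every piece as good and claims total measure at least $\frac{1}{(d-1)^2}$. The correct selection is simpler: restrict to those $J\in\mathcal{P}_n(\delta,\delta')$ with $|J|>T^k\delta-\delta$ (for such $J$ the isometric image $T^iJ=[\delta,\delta+|J|)$ contains $T^k\delta$, which forces $T^kJ\cap J\neq\emptyset$, so $J\in\mathcal{B}$), and note by pigeonhole that, since $\#\mathcal{P}_n(\delta,\delta')\le n+1$ and the total measure is at least $\frac{1}{(d-1)^2}$, a definite fraction of that measure lies in intervals of length at least $\frac{1}{4(d-1)^2(n+1)}>T^k\delta-\delta$. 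Your displacement computation on a single long $J$ is essentially right; what is missing is the passage from ``some $k\le n/2$'' to ``a power comparable to $n$,'' and that is where most of the work of this lemma lives.
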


\begin{proof}[Proof of Proposition \ref{prop:keane-rigidity} assuming Lemma \ref{lem:possibility_2}]  Lemma \ref{lem:possibility_2} establishes Proposition \ref{prop:keane-rigidity} when Lemma \ref{lem:sizable}  \eqref{possibility2} holds, and so we now prove Proposition \ref{prop:keane-rigidity} in the case of Lemma \ref{lem:sizable} \eqref{possibility1}. 
There exists an interval $J\in \mathcal{P}_S$ with $|J|>\frac 1 {10}\frac 1 {d^2}\frac 1 n$ 
satisfying the assumptions for Lemma \ref{lem:easy rig} with $m=\lfloor \frac n 2 \rfloor$.   
 So there exists $A$ with  $\lambda(A)>|J|\frac {m-2} {2(d+2)}\geq \frac {n/2-2}{2(d+2)}\frac 1 {10}\frac 1 {d^2}\frac 1 n $ so that for all $x\in A$,
$$d(T^jx,x)<|J|.$$ Now $J\in \mathcal{P}_S$ and thus \eqref{eq:n size} implies that its diameter is less than $\epsilon$. Lastly, by Assumption \ref{cond:cont} $j>m\geq \lceil \frac n 2 \rceil$ and by the proof of Lemma \ref{lem:easy rig}  we have $j<2m$. completing the proof. 
\end{proof}

\begin{proof}[Proof of Lemma \ref{lem:possibility_2}] For concreteness we assume that there exists $k\leq n$ so that $$0<T^k\delta-\delta < \frac 1 4\frac 1{10dn}.$$ (The other case is similar.) \\

\noindent
\textbf{Sublemma:} If $J=[a,b)\in \mathcal{P}_n(\delta,\delta')$ and $T^kJ \cap J \neq \emptyset$ then $T^{ik}J=i(T^k\delta-\delta)+J$  for all $ik<n$. 

Notation: if $B\subset \mathbb{R}$ and $c \in \mathbb{R}$ then $c+B=\{c+b:b\in B\}$.

\begin{proof} $T^ia=\delta$ for some $0\leq i\leq n$ by construction. Now (by the case we are considering) $T^ka\in J$  and so $T^iT^ka=T^ka+(T^ia-a)$ because $T^i$ acts as an isometry on $J$. So $T^k\delta=T^kT^ia=\delta+ T^ka-a$. 

We wish to show by induction that for all $i$ satisfying $(i+1)k<n$, we have 
$T^{(i+1)k}J=T^k\delta-\delta+T^{ik}J,$
 which will establish the sublemma. To do this we inductively assume that 
 \begin{equation}\label{eq:assump 1}T^{ik}J=i(T^k\delta-\delta)+J
 \end{equation} and 
 \begin{equation}\label{eq:assump 2} T^{ik}y-T^{(i-1)k}y=T^k\delta-\delta
 \end{equation}
for all $y \in J$. The case of $i=1$ is above.

We now prove the sublemma by induction. Assume it is true for $i=\ell$ and that $(\ell+1)k<n$.  
By our assumption, $T^{k}J \cap J \neq \emptyset$, and so $T^{(\ell+1) k}J\cap T^{\ell k} J\neq \emptyset$. 
So, there exists $y \in T^{(\ell +1)k}J\cap T^{\ell k}J$. Since $y \in T^{\ell k}J$, there exists $x \in J$ so that $T^{\ell k}x=y$, and thus 
\begin{equation}\label{eq:induct1}T^{\ell k}x-x=\ell(T^k\delta-\delta)
\end{equation} by our induction hypothesis \eqref{eq:assump 1}. 
Now we have
$$T^{-k}y \in T^{-k}(T^{(\ell+1)k}J\cap T^{\ell k}J)=T^{\ell k}J\cap T^{(\ell-1)k}J\neq \emptyset$$ and so $T^{k}z-z=T^kx'-x'$ for all $z \in T^{(\ell-1)k}J$ and $x' \in T^{\ell k}J$. By applying \eqref{eq:assump 2} to $z$ we see that this is $T^k \delta-\delta$. This establishes that $T^{(\ell+1)k}x-T^{(\ell+1-1)k}x=T^{k}\delta-\delta$ for all $x\in J$, inductive claim \eqref{eq:assump 2} for $i=\ell+1$. 
Combining this with \eqref{eq:induct1} we see 
that if $x \in J$ then $$T^{(\ell+1)k}x-x=T^{(\ell+1) k} x-T^{\ell k}x+(T^{\ell k}x-x)=T^k\delta-\delta+\ell(T^k\delta-\delta)$$ establishing inductive claim \eqref{eq:assump 1} (for $i=\ell+1$) and the sublemma.
\end{proof}
We now complete the proof of the lemma 
using the sublemma. Let $\mathcal{B}=\{J \in \mathcal{P}_n(\delta,\delta'): T^kJ \cap J \neq \emptyset\}$, the set of intervals to which we can apply the sublemma. Let $\tilde{J}=\cup_{i=0}^{\lfloor \frac n {2k}\rfloor}T^{ik}J$ and $A=\cup_{J\in \mathcal{B}}\tilde{J}$. 

Recall that $D$ is the set of discontinuities of $T$.
No $\tilde{J}$ intersects $T^{-j}D$ for \mbox{$0\leq j\leq \frac n 2$}, because otherwise $T^{-j-\ell k}D\ \cap\ J\neq \emptyset$ for $0\leq \ell \leq \lfloor \frac n {2k}\rfloor$. As \mbox{$0\leq \ell k+j\leq n$}, this contradicts the fact that $J \in \mathcal{P}_S$. So we may assume that these diameters are less than $\epsilon$ (if $n$ is large enough). 
Now by the sublemma (and our concreteness assumption) we have that 
\begin{equation} \label{eq:use_n2}
T^{k \left\lfloor \frac n {2k}\right\rfloor }x-x=\left\lfloor \frac n {2k}\right\rfloor (T^k\delta-\delta)<diam(J)+\left\lfloor \frac n {2k}\right\rfloor(T^k\delta-\delta)=diam(\tilde{J})
\end{equation}
for all $J \in \mathcal{B}$.  Because $\frac{n}{20 \cdot d}\lceil\frac n {2k}\rceil\leq 20nd$, the only thing left to show is that $A$ has the specified measure. This follows because, by our assumption that $(\delta,\delta')$ are $n$-sizable, and the fact that $\#\mathcal{P}_n(\delta,\delta')\leq n+1$, we have that  a set of measure at least $\frac 3 4  \frac 1 {d^2}$ must be contained in intervals, each of whose length is at least $\frac 1 4 \frac 1 {d^2} \frac 1 {n+1}$. Because we are assuming possibility (ii) of Lemma~\ref{lem:sizable}, any such interval is longer than $T^k\delta-\delta$ and thus is in $\mathcal{B}$. 
\end{proof}

\textbf{Acknowledgments:} The research of J. Chaika was supported in part by NSF grants DMS-135500 and DMS-1452762, the Sloan foundation, a Poincar\'e chair, and a Warnock chair. He would also like to thank M. Boshernitzan for asking him this question.

\end{document}